\newtheorem{theorem}{Theorem} \theoremstyle{definition}
\newtheorem{lemma}[theorem]{Lemma}
\theoremstyle{remark}
\newtheorem{remark}{Remark}
\numberwithin{remark}{section}
\numberwithin{theorem}{section}
\numberwithin{equation}{section}
\numberwithin{definition}{section}
\newcommand{\tri}{\mathcal{T}}
\newcommand{\triH}{\tri_H}
\newcommand{\grad}{\nabla}
\newcommand{\gmin}{\alpha}
\newcommand{\gmax}{\beta}
\newcommand{\dx}{\operatorname*{d}\hspace{-0.3ex}x}
\newcommand{\ddiv}{\operatorname*{div}}
\newcommand{\dist}{\operatorname*{dist}}
\newcommand{\diam}{\operatorname*{diam}}
\newcommand{\kernel}{\operatorname*{kernel}}
\newcommand{\support}{\operatorname*{supp}}
\newcommand{\N}{\mathcal{N}}
\newcommand{\R}{\mathbb{R}}
\newcommand{\IH}{\mathcal{I}_H}
\newcommand{\tnorm}[1]{|||#1|||}
\newcommand{\norm}[1]{\|#1\|}
\newcommand{\Cint}{C_{\IH}}
\newcommand{\Col}{C_{\operatorname*{ol}}}
\newcommand{\Vf}{V_{\operatorname*{f}}}
\newcommand{\vf}{v_{\operatorname*{f}}}
\newcommand{\uf}{u_{\operatorname*{f}}}
\newcommand{\Vc}{V_{\operatorname*{c}}}
\newcommand{\vc}{v_{\operatorname*{c}}}
\newcommand{\uc}{u_{\operatorname*{c}}}
\newcommand{\tuc}{\tilde{u}_{\operatorname*{c}}}
\newcommand{\Pf}{\mathcal{P}_{\operatorname*{f}}}
\renewcommand{\H}{\text{H}}
\title[Computation of eigenvalues by numerical upscaling]{Computation of eigenvalues by numerical upscaling}
\author{Axel M\r{a}lqvist and Daniel Peterseim}
\thanks{A. M\r{a}lqvist is supported by The G\"{o}ran Gustafsson Foundation and The Swedish Research Council.}
\address[D. Peterseim]{Rheinische Friedrich-Wilhelms-Universit\"at Bonn, Institute for Numerical Simulation, Wegelerstr. 6, 53115 Bonn, Germany}
\thanks{D. Peterseim was partially supported by the DFG Research Center Matheon Berlin through project C33.}
\email{peterseim@ins.uni-bonn.de}
\address[A. M\r{a}lqvist]{Department of Mathematical Sciences, Chalmers University of Technology and University of Gothenburg, SE-412 96 G\"{o}teborg, Sweden}
\date{\today}
\keywords{eigenvalue, finite element, multiscale, upscaling, numerical homogenization, localized orthogonal decomposition, two-grid method}
\subjclass[2000]{65N30, 65N25, 65N15}
\begin{document}

\begin{abstract}
We present numerical upscaling techniques for a class of linear second-order self-adjoint elliptic partial differential operators (or their high-resolution finite element discretization). As prototypes for the application of our theory we consider benchmark multi-scale eigenvalue problems in reservoir modeling and material science. We compute a low-dimensional generalized (possibly mesh free) finite element space that preserves the lowermost eigenvalues in a superconvergent way. The approximate eigenpairs are then obtained by solving the corresponding low-dimensional algebraic eigenvalue problem. The rigorous error bounds are based on two-scale decompositions of $\H^1_0(\Omega)$ by means of a certain Cl\'ement-type quasi-interpolation operator.
\end{abstract}

\maketitle

\section{Introduction}
This paper presents and analyzes a novel numerical upscaling technique for computing eigenpairs of self-adjoint linear elliptic second order differential operators with arbitrary positive bounded coefficients. The precise setting of the paper is as follows. Let $\Omega\subset\mathbb{R}^{d}$ be a bounded polyhedral Lipschitz domain and let $A\in L^\infty(\Omega,\R^{d\times d}_{\operatorname*{sym}})$ be a matrix-valued coefficient with uniform spectral bounds $0<\gmin\leq \gmax<\infty$,
\begin{equation}\label{e:spectralbound}
\sigma(A(x))\subset [\gmin,\gmax]
\end{equation}
for almost all $x\in \Omega$.
We want to approximate the eigenvalues of the prototypical operator $-\ddiv(A\grad \bullet)$. The corresponding eigenproblem in variational formulation reads: find pairs consisting of an eigenvalue $\lambda\in \R$ and associated non-trivial eigenfunction $u\in V:=H_{0}^{1}( \Omega)$ such that
\begin{subequations}\label{e:model}
\end{subequations}
\begin{equation}\label{e:modelstrong}
a(u,v):=\int_\Omega (A\nabla u)\cdot \nabla v \dx= \lambda\int_\Omega u v \dx=:\lambda(u,v)_{L^2(\Omega)}\tag{\ref{e:model}}
\end{equation}
for all $v\in V$.
We are mainly interested in the lowermost eigenvalues of \eqref{e:model} or, more precisely, in the lowermost eigenvalues of the discretized problem: find $\lambda_h\in \R$ and associated non-trivial eigenfunctions $u_h\in V_h\subset V$ such that
\begin{equation}\label{e:modelstrongh}
a(u_h,v)=\lambda_h( u_h,v)_{L^2(\Omega)}\quad\text{for all }v\in V_h.  \tag{\ref{e:model}.h}
\end{equation}
Here and throughout the paper, the discrete space $V_h\subset V$ shall be a conforming finite element space of dimension $N_h$ based on some regular finite element mesh $\tri_h$ of width $h$.

Popular approaches for the computation of these eigenvalues include Lanczos/Arnoldi-type iterations (as implemented, e.g., in \cite{MR1621681}) or the QR-algorithm applied directly to the $N_h$-dimensional finite element matrices. If a certain structure of the discretization can be exploited (e.g., a hierarchy of finite element meshes and/or spaces) some preconditioned outer iteration for the eigenvalue approximation may be performed and linear problems are solved (approximately) in every iteration step \cite{MR526484}, \cite{MR1942725}, \cite{MR1991264}; see also \cite{MR2427903} and \cite{MR2425502} and references therein.

Our aim is to avoid the application of any eigenvalue solver to the fine scale discretization \eqref{e:modelstrongh} directly.
We introduce a second, coarser discretization scale $H>h$ instead. On the corresponding coarse mesh $\tri_H$, we compute a generalized finite element space $\Vc$ of dimension $N_H\ll N_h$. The solutions $(\lambda_H,\uc)\in\R\times\Vc$ of
\begin{equation}\label{e:modelstrongH}
a(\uc,v)=\lambda_H (\uc,v)_{L^2(\Omega)}\quad\text{for all }v\in \Vc,  \tag{\ref{e:model}.H}
\end{equation}
then yield accurate approximations of the first $N_H$ eigenpairs of \eqref{e:modelstrongh} and, hence, of the first $N_H$ eigenpairs of \eqref{e:modelstrong} (provided that $V_h$ is properly chosen).

The computation of the coarse space $\Vc$ involves the (approximate) solution of $N_H$ linear equations on the fine scale (one per coarse node). We emphasize that these linear problems are completely independent of each other. They can be computed in  parallel without any communication.

The error $\lambda_H-\lambda_h$ between corresponding eigenvalues of \eqref{e:modelstrongH} and \eqref{e:modelstrongh}, i.e., the error committed by the upscaling from the fine discretization scale $h$ to the coarse discretization scale $H$, is expressed in terms of $H$. Without any assumptions on the smoothness of the eigenfunctions of \eqref{e:modelstrong} or \eqref{e:modelstrongh}, we prove that these errors are at least of order $H^4$. Note that a standard first-order conforming finite element computation on the coarse scale yields accuracy $H^2$ under full $\H^2(\Omega)$ regularity, see e.g.~\cite{MR1770064}.
Since our estimates are both, of high order (at least $H^4$) and independent
of the underlying regularity, the accuracy of our approximation may actually suffice to fall below the error $\lambda_h-\lambda$ of the fine scale discretization which is of order $C h^{2s}$ where both the constant $C$ and the exponent $s\in[0,1]$ depend on the regularity of the data (convexity of $\Omega$, differentiability and variability of $A$) in a crucial way.

The idea of employing a two-level techniques for the acceleration of eigensolvers is not new. The two-grid method of \cite{MR1677419} allows certain post-processing (solution of linear problems on the fine scale). For standard first-order conforming finite element coarse spaces, this technique decreases the eigenvalue error from $H^2$ to $H^4$ (up to fine scale errors as above) if the corresponding eigenfunctions are $\text{H}^2$-regular. The regularity
assumption is essential and not justified on non-convex domains or for heterogeneous and highly variable coefficients. However, the post-processing technique applies as well to the generalized finite element coarse space $\Vc$ and yields eigenvalue errors of order $H^6$ without any regularity assumptions.
 
In cases with singular eigenfunctions (due to re-entrant corners in the domain or isolated jumps of the coefficient), one might as well use modern mesh-adaptive algorithms driven by some a posteriori error estimator as proposed and analyzed, e.g., in \cite{MR1770064}, \cite{MR1909253}, \cite{MR1998821}, \cite{MR2810801}, \cite{MR2531037}, \cite{MR2485445}, \cite{MR2760060}, \cite{doi:10.1137/090769430}, \cite{MR3018645}. We are not competing with these efficient algorithms. However, adaptive mesh refinement has its limitations. For instance, if the diffusion coefficient $A$ is highly variable on microscopic scales, the mesh width has to be sufficiently small to resolve these variations \cite{PS12}. For problems in geophysics or material sciences with characteristic geometric features on microscopic length scales, this so-called resolution condition is often so restrictive that the initial mesh must be chosen very fine and further refinement exceeds computer capacity. Our method is especially designed for such situations which require coarsening rather than refinement.

A particular application of our methodology is the computation ground states of Bose-
Einstein condensates as solutions of the Gross-Pitaevskii equation. Here, certain resolution (small $h$) is required in order to ensure unique solvability of the discrete non-linear eigenvalue problem. It is already exposed in \cite{HMP13} that our upscaling approach leads to a significant speed-up in computational time because the expensive iterative solver for the non-linear eigenproblem needs to be applied solely on a space of very low dimension.

The main tools in this paper are localizable orthogonal decompositions of $\H^1_0(\Omega)$ (or its subspace $V_h$) into coarse and fine parts. These decompositions   are presented in Section~\ref{s:omd}. The two-level method for the approximation of eigenvalues is presented in Section~\ref{s:approx}. Section~\ref{s:error} contains its error analysis. The efficient local approximation of the coarse space, the generalization to non-nested grids, a post-processing technique, and further complexity issues are discussed in Section~\ref{s:practical}. Finally, Section~\ref{s:numexp} demonstrates the performance of the method in numerical experiments.

In the remaining part of this paper, \label{p:lesssim} we will frequently make use of the notation $b_1\lesssim b_2$ which abbreviates $b_1\leq Cb_2$,  with some multiplicative constant $C>0$ which only depends on the domain $\Omega$ and the parameter $\gamma$ (cf. \eqref{e:shapereg} below) that measures the quality of some underlying finite element mesh. We emphasize that the $C$ does \emph{not} depend on the mesh sizes $H$, $h$, the eigenvalues, or the coefficient $A$. Furthermore, $b_1\approx b_2$ abbreviates $b_1\lesssim b_2\lesssim b_1$.

\section{Finite Element Spaces and Quasi-Interpolation}\label{s:pre}
This section presents some preliminaries on finite element meshes, spaces, and interpolation.
\subsection{Finite element mesh}\label{ss:mesh}
We consider two discretization scales $H> h>0$. Let $\tri_H$ (resp. $\tri_h$) denote corresponding regular (in the sense of \cite{CiarletPb}) finite element meshes of $\Omega$ into closed simplices with mesh-size functions $0<H\in L^\infty(\Omega)$ defined by $H\vert_T=\diam T=:H_T$ for all $T\in\tri_H$ (resp. $0<h\in L^\infty(\Omega)$ defined by $h\vert_t=\diam t=:h_t$ for all $t\in\tri_h$). The mesh sizes may vary in space but we will not exploit the possible mesh adaptivity in this paper.

The error bounds, typically, depend on the maximal mesh sizes $\norm{H}_{L^\infty(\Omega)}$. If no confusion seems likely, we will use $H$ also to denote the maximal mesh size instead of writing $\norm{H}_{L^\infty(\Omega)}$.
For the sake of simplicity we assume that $\tri_h$ is derived from $\tri_H$ by some regular, possibly non-uniform, mesh refinement. However, this condition is not essential and Section~\ref{ss:coarsening} will discuss possible generalizations.

As usual, the error analysis depends on the constant $\gamma>0$ which represents the shape regularity of the finite element mesh $\triH$;
\begin{equation}
\label{e:shapereg}
\gamma:=\max_{T\in\triH}\gamma_T\quad\text{with}\quad \gamma_T:=\frac{\diam{T}}{\diam{B_T}}\; \text{ for } T\in\triH,
\end{equation}
where $B_T$ denotes the largest ball contained in $T$.

\subsection{Finite element spaces}\label{ss:fem}
The first-order conforming finite element space corresponding to $\tri_H$ is given by
\begin{equation}\label{e:couranta}
V_H:=\{v\in V\;\vert\;\forall T\in\tri_H,v\vert_T \text{ is a polynomial of total degree}\leq 1\}.
\end{equation}
Let $\N_H$ denote the set of interior vertices of $\tri_H$. For every vertex $z\in\N_H$, let $\phi_z\in V_H$ denote the corresponding nodal basis function (tent/hat function) determined by nodal values
\begin{equation*}
 \phi_z(z)=1\;\text{ and }\;\phi_z(y)=0\quad\text{for all }y\neq z\in\N_H.
\end{equation*}
These nodal basis functions form a basis of $V_H$. The dimension of $V_H$ equals the number of interior vertices,
\begin{equation*}
N_H:=\dim V_H=|\N_H|.                                                                                                                                                                                                                                     \end{equation*}

Let $V_h\supset V_H$ denote some conforming finite element space corresponding to the fine mesh $\tri_h$. It can be the space of continuous piecewise affine functions on the fine mesh or any other (generalized) finite element space that contains $V_H$, e.g., the space of continuous $p$-th order piecewise polynomials as in  \cite{MR2608360}. By $N_h:=\dim V_h$ we denote the dimension of $V_h$. For standard choices of $V_h$, this dimension is proportional to the number of interior vertices in the fine mesh $\tri_h$.

\subsection{Quasi-interpolation}\label{ss:quasi}
The key tool in our construction will be the bounded linear surjective Cl\'ement-type (quasi-)interpolation operator $\IH: \H^1_0(\Omega)\rightarrow V_H$ presented and analyzed in \cite{MR1706735}. Given $v\in \H^1_0(\Omega)$, $\IH v := \sum_{z\in\N_H}(\IH v)(z)\phi_z$ defines a (weighted) Cl\'ement interpolant with nodal values
\begin{equation}\label{e:clement}
 (\IH v)(z):=\frac{(v, \phi_z)_{L^2(\Omega)}}{(1,\phi_z)_{L^2(\Omega)}}
\end{equation}
for $z\in\N_H$. The nodal values are weighted averages of the function over nodal patches $\omega_z:=\support\phi_z$.
Recall the (local) approximation and stability properties of the interpolation operator $\IH$ \cite{MR1706735}: There exists a generic  constant $\Cint$ such that for all $v\in \H^1_0(\Omega)$ and for all $T\in\triH$ it holds
\begin{equation}\label{e:interr}
 H_T^{-1}\|v-\IH v\|_{L^{2}(T)}+\|\nabla(v-\IH v)\|_{L^{2}(T)}\leq \Cint \| \nabla v\|_{L^2(\omega_T)},
\end{equation}
where $\omega_T:=\cup\{K\in\triH\;\vert\;T\cap K\neq\emptyset\}$. The constant $\Cint$ depends on the shape regularity parameter $\gamma$ of the finite element mesh $\triH$ (see \eqref{e:shapereg} above) but not on $H_T$.

Note that there exists a constant $\Col>0$ that only depends on $\gamma$ such that the number of elements covered by $\omega_T$ is uniformly bounded (w.r.t. $T$)
by $\Col$,
\begin{equation}\label{e:Col}
 \max_{T\in\tri_H}\left|\{K\in\triH\;\vert\;K\subset \omega_T\}\right|\leq\Col.
\end{equation}
Both constant, $\Cint$ and $\Col$, may be hidden in the notation ``$\lesssim$'' introduced at the end of the Introduction on page \pageref{p:lesssim}.

\section{Two-scale Decompositions}\label{s:omd}
Two-scale decompositions of functions $u\in V_h$ into some macroscopic/coarse part $\uc$ plus some microscopic/fine part $\uf$ with a certain orthogonality relation are at the very heart of this paper. The macroscopic or coarse part will be an element of a low-dimensional (classical or generalized) finite element space based on some coarse finite element mesh. The microscopic or fine part may oscillate on fine scales that cannot be represented on the coarse mesh.

We stress that all subsequent results are valid even if $h=0$, i.e., if $V_h$ is replaced with $V=H^1_0(\Omega)$. Actually, the structure of $V_h$ being the space of continuous piecewise polynomials is never exploited. As far as the theory is concerned, $V_h$ could be any space (finite or infinite dimensional) that satisfies $V_H\subsetneq V_h\subseteq H^1_0(\Omega)$. 

The initial coarse space $V_H$ may as well be generalized. This will be discussed in Section~\ref{ss:coarsening}.

\subsection{$L^2$-orthogonal two-scale decomposition}\label{ss:decomp}
We define the fine scale space
$$\Vf:=\kernel \bigl(\IH\vert_{V_h}\bigr)\subset V_h,$$
which will take over the role of the microscopic/fine part in all subsequent decompositions.

Our particular choice of a quasi-interpolation operator gives rise to the following orthogonal decomposition. 
Remember that $(\bullet,\bullet)_{L^2(\Omega)}:=\int_\Omega \bullet\bullet\dx$ abbreviates the canonical scalar product in $L^2(\Omega)$ and let  $\norm{\bullet}:=\sqrt{(\bullet,\bullet)_{L^2(\Omega)}}$ abbreviate the corresponding norm of $L^2(\Omega)$.
\begin{lemma}[$L^2$-orthogonal two-scale decomposition]\label{l:L2od}
Any function $u\in V_h$ can be decomposed uniquely into the sum of $u_H:=\IH\vert_{V_H}^{-1}(\IH u)\in V_H$ and $\uf:=u-u_H\in\Vf$ with
\begin{equation}\label{e:ortho}
 (u_H,\uf)_{L^2(\Omega)}=0.
\end{equation}
The orthogonality implies stability in the sense of
\begin{equation*}
 \norm{u_H}^2+\norm{\uf}^2=\norm{u}^2.
\end{equation*}
\end{lemma}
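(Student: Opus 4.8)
The plan is to recognize the claimed splitting as nothing but the $L^2(\Omega)$-orthogonal decomposition of $u$ with respect to $V_H$, and to exploit the special structure of the Cl\'ement operator $\IH$ to identify the inverse of its restriction with the $L^2$-projection onto $V_H$. The crucial observation is that for $v\in V_h$ one has $\IH v=0$ if and only if $(v,\phi_z)_{L^2(\Omega)}=0$ for every coarse node $z\in\N_H$: indeed, by the definition \eqref{e:clement} the nodal value $(\IH v)(z)$ is a nonzero multiple of $(v,\phi_z)_{L^2(\Omega)}$, the denominator $(1,\phi_z)_{L^2(\Omega)}$ being strictly positive. Hence $\Vf=\kernel(\IH\vert_{V_h})$ is precisely the $L^2(\Omega)$-orthogonal complement of $V_H$ inside $V_h$, since the $\phi_z$ span $V_H$. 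With this in hand, orthogonality and stability become essentially automatic, and the bulk of the work reduces to confirming that $u_H$ is well defined.

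First I would check that $\IH\vert_{V_H}\colon V_H\to V_H$ is invertible, so that $u_H:=\IH\vert_{V_H}^{-1}(\IH u)$ makes sense. Since this is a linear self-map of the finite-dimensional space $V_H$, it suffices to verify injectivity. If $v\in V_H$ satisfies $\IH v=0$, then $(v,\phi_z)_{L^2(\Omega)}=0$ for all $z\in\N_H$ by the observation above; writing $v=\sum_{z\in\N_H}v_z\phi_z$ gives $\norm{v}^2=\sum_{z\in\N_H}v_z\,(v,\phi_z)_{L^2(\Omega)}=0$, whence $v=0$. Thus $\IH\vert_{V_H}$ is injective, hence bijective, and $u_H$ is uniquely determined.

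Next I would verify the membership and orthogonality claims. Since $u\in V_h$ and $u_H\in V_H\subset V_h$, we have $\uf=u-u_H\in V_h$; moreover $\IH u_H=\IH\vert_{V_H}(u_H)=\IH u$ directly from the definition of $u_H$, so $\IH\uf=\IH u-\IH u_H=0$ and therefore $\uf\in\Vf$. For the orthogonality \eqref{e:ortho}, I would invoke $\IH\uf=0$, which gives $(\uf,\phi_z)_{L^2(\Omega)}=0$ for every $z\in\N_H$, and expand $u_H$ in the nodal basis: with $u_H=\sum_{z\in\N_H}(u_H)_z\phi_z$ one obtains $(u_H,\uf)_{L^2(\Omega)}=\sum_{z\in\N_H}(u_H)_z\,(\phi_z,\uf)_{L^2(\Omega)}=0$.

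Finally, the stability identity is the Pythagorean theorem: expanding $\norm{u}^2=\norm{u_H+\uf}^2=\norm{u_H}^2+2(u_H,\uf)_{L^2(\Omega)}+\norm{\uf}^2$ and discarding the vanishing cross term yields $\norm{u_H}^2+\norm{\uf}^2=\norm{u}^2$. Uniqueness of the decomposition follows from $V_H\cap\Vf=\{0\}$, which is exactly the injectivity established above. The only genuine obstacle is the well-definedness of $u_H$; once the kernel of $\IH$ is characterized by $L^2$-orthogonality to the hat functions $\phi_z$, every remaining step is a short computation.
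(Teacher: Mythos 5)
Your proof is correct and follows essentially the same route as the paper: both arguments hinge on the observation that $\IH\vf=0$ forces $(\vf,\phi_z)_{L^2(\Omega)}=0$ for every node $z\in\N_H$, and then obtain orthogonality by expanding the coarse part in the nodal basis, with stability following from the Pythagorean identity. The only difference is that you actually carry out the injectivity argument for $\IH\vert_{V_H}$ (via $\norm{v}^2=\sum_{z\in\N_H}v_z(v,\phi_z)_{L^2(\Omega)}=0$), a step the paper dismisses as ``easily verified,'' so your write-up is, if anything, more complete.
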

\begin{proof}[Proof of Lemma~\ref{l:L2od}]
It is easily verified that the restriction of $\IH$ on the finite element space $V_H$ is invertible. This yields the decomposition.

For the proof of orthogonality, let $v_H=\sum_{z\in\N_H}v_H(z)\phi_z\in V_H$ and $\vf\in\Vf$ be arbitrary. Since $\IH\vf=0$, we have that $(\phi_z, \vf)_{L^2(\Omega)}=(\IH\vf)(z)\int_\Omega\phi_z\dx=0$ for all $z\in\N_H$. This yields
$$(v_H, \vf)_{L^2(\Omega)} =\sum_{z\in\N_H}v_H(z)(\phi_z, \vf)_{L^2(\Omega)}=0$$
and shows that $V_H$ and $\Vf$ are orthogonal subspaces of $V_h$.
\end{proof}
We may rewrite Lemma~\ref{l:L2od} as
\begin{equation}\label{e:l2od}
 V_h=V_H\oplus \Vf\;\text{ and }\; (V_H,\Vf)_{L^2(\Omega)}=0.
\end{equation}
\begin{remark}[$L^2$-projection onto the finite element space]
Note that the operator $\IH$ is well-defined as a mapping from $L^2(\Omega)$ onto $V_H$. In particular, it is stable in the sense that for any $v\in L^2(\Omega)$, it holds that $\|\IH v\|\lesssim \|v\|$. From the arguments of Lemma~\ref{l:L2od} one easily verifies that the $L^2$-orthogonal projection $\Pi^{L^2}_{V_H}:L^2(\Omega)\rightarrow V_H$ onto the finite element space $V_H$ may be characterized via the modified Cl\'ement interpolation \eqref{e:clement},
\begin{equation*}
 \Pi^{L^2}_{V_H}= \IH\vert_{V_H}^{-1} \IH.
\end{equation*}
Furthermore, it holds $\Vf=\kernel\bigl(\Pi^{L^2}_{V_H}\vert_{V_h}\bigr)$, i.e., $\Vf$ might as well be characterized via $\Pi^{L^2}_{V_H}$. This does not change the method. For theoretical purposes, we prefer to work with $\IH$ because it is a local operator.
\end{remark}

\subsection{$a$-orthogonal two-scale decomposition}
The orthogonalization of the decomposition \eqref{e:l2od} with respect to the scalar product $a(\bullet,\bullet):=\int_\Omega (A\nabla\bullet)\cdot\nabla\bullet\dx$ yields the definition of a generalized finite element space $\Vc$, that is the $a$-orthogonal complement of $\Vf$ in $V_h$. Given $v\in V_h$, define the $a$-orthogonal fine scale projection operator $\Pf v\in\Vf$ by
\begin{equation*}\label{e:finescaleproj}
 a(\Pf v,w)=a(v,w)\quad\text{for all }w\in \Vf.
\end{equation*}
We define the energy norm $\tnorm{\bullet}:=\sqrt{a(\bullet,\bullet)}$ (the norm induced by the scalar product $a$).
\begin{lemma}[$a$-orthogonal two-scale decomposition]\label{l:aod}
Any function $u\in V_h$ can be decomposed uniquely into $u=\uc+\uf$, where
$$\uc:=(1-\Pf)u\in(1-\Pf)V_H=:\Vc$$
and $$\uf:=\Pf u\in\Vf=\kernel (\IH\vert_{V_h}).$$
The decomposition is orthogonal
\begin{equation}\label{e:orthoa}
 a(\uc,\uf)=0,
\end{equation}
and, hence, stable in the sense of
\begin{equation}\label{e:stab}
 \tnorm{\uc}^2+\tnorm{\uf}^2=\tnorm{u}^2.
\end{equation}
\end{lemma}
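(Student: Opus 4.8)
\emph{The plan} is to reduce the whole statement to the $L^2$-orthogonal splitting $V_h=V_H\oplus\Vf$ already established in Lemma~\ref{l:L2od}, together with the defining Galerkin property of the fine-scale projection $\Pf$. The identity $u=\uc+\uf$ is immediate by construction, since $\uc+\uf=(1-\Pf)u+\Pf u=u$, and the membership $\uf=\Pf u\in\Vf$ holds by the very definition of $\Pf$. Hence the only nontrivial part of \emph{existence} is to verify that $\uc=(1-\Pf)u$ really lands in $\Vc=(1-\Pf)V_H$: the formula $(1-\Pf)u$ makes sense for every $u\in V_h$, but its membership in $(1-\Pf)V_H$ is not evident unless $u$ already lies in $V_H$.

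I would settle this --- the one genuinely non-automatic step --- by interlocking the two decompositions. Given $u\in V_h$, first apply Lemma~\ref{l:L2od} to write $u=u_H+w$ with $u_H\in V_H$ and $w\in\Vf$. Since $\Pf$ is the $a$-orthogonal projection onto $\Vf$, it fixes $\Vf$ pointwise, so $\Pf w=w$ and therefore $(1-\Pf)w=0$. Consequently
\begin{equation*}
\uc=(1-\Pf)u=(1-\Pf)u_H+(1-\Pf)w=(1-\Pf)u_H\in(1-\Pf)V_H=\Vc,
\end{equation*}
as required. I expect this to be the main obstacle; everything downstream is a routine consequence of orthogonality.

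For the remaining claims I would argue as follows. By the definition of $\Pf$ one has $a((1-\Pf)u,w)=a(u,w)-a(\Pf u,w)=0$ for every $w\in\Vf$, so $\uc=(1-\Pf)u$ is $a$-orthogonal to all of $\Vf$; choosing $w=\uf=\Pf u\in\Vf$ yields \eqref{e:orthoa}. Expanding $\tnorm{u}^2=a(\uc+\uf,\uc+\uf)$ and cancelling the cross term $2a(\uc,\uf)=0$ then gives the stability identity \eqref{e:stab}. Finally, \emph{uniqueness} follows from the same computation applied to a general $\vc=(1-\Pf)v_H\in\Vc$ and arbitrary $w\in\Vf$, which shows $\Vc\perp_a\Vf$; since $a$ is positive definite by the lower spectral bound $\gmin>0$, any element of $\Vc\cap\Vf$ is $a$-orthogonal to itself and hence vanishes, so $V_h=\Vc\oplus\Vf$ is a genuine $a$-orthogonal direct sum and the splitting $u=\uc+\uf$ is unique.
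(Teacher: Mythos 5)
Your proof is correct, but there is nothing in the paper to compare it against: the authors state Lemma~\ref{l:aod} without proof, passing directly to the reformulation \eqref{e:aod}, evidently regarding it as an immediate consequence of the definition of $\Pf$ and of Lemma~\ref{l:L2od}. What you have written is the natural filling-in of that omission, and you correctly isolate the only step that is not purely formal: that $\uc=(1-\Pf)u$ lies in $\Vc=(1-\Pf)V_H$ for \emph{every} $u\in V_h$, not merely for $u\in V_H$. Your resolution --- split $u=u_H+w$ with $u_H\in V_H$, $w\in\Vf$ via Lemma~\ref{l:L2od}, and use that the $a$-orthogonal projection $\Pf$ fixes $\Vf$ pointwise, so $(1-\Pf)w=0$ and hence $(1-\Pf)u=(1-\Pf)u_H\in\Vc$ --- is exactly the mechanism by which the two decompositions interlock; it is the same fact ($V_h=V_H\oplus\Vf$ with $V_H\cap\Vf=\{0\}$) that the paper later invokes in Section~\ref{ss:varcoarse} to conclude $\dim\Vc=\dim V_H$. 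The remaining steps (Galerkin orthogonality of $\Pf$ giving \eqref{e:orthoa}, expansion of $\tnorm{\uc+\uf}^2$ giving \eqref{e:stab}, and $\Vc\cap\Vf=\{0\}$ by definiteness of $a$ giving uniqueness) are routine and correctly executed. One microscopic remark: positive definiteness of $a$ on $V_h\subset\H^1_0(\Omega)$ uses not only the spectral bound $\gmin>0$ but also Friedrichs' inequality, so that $\norm{\nabla v}=0$ forces $v=0$; this is standard and does not affect your argument.
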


In other words,
\begin{equation}\label{e:aod}
 V_h=\Vc\oplus \Vf\;\text{and }\; a(\Vc,\Vf)=0.
\end{equation}

We shall emphasize at this point that the decompositions in Lemma~\ref{l:L2od} and Lemma~\ref{l:aod} are different in general. In particular, the fine scale part $\vf$ may not be the same.

The orthogonalization procedure (with respect to $a(\bullet,\bullet)$) does not preserve the $L^2$-orthogonality. However, the key observation of this section is that the resulting decomposition \eqref{e:aod} is almost orthogonal in $L^2(\Omega)$.
\begin{theorem}[$L^2$-quasi-orthogonality of the $a$-orthogonal decomposition]\label{t:quasiL2}
The decomposition $V_h=\Vc\oplus \Vf$ from Lemma~\ref{l:aod} is $L^2$-quasi-orthogonal in the sense that for all $\vc\in\Vc$  and all $\vf\in\Vf$, it holds
\begin{equation}\label{e:orthoquasi}
 (\vc,\vf)_{L^2(\Omega)}\lesssim H^2\norm{\nabla\vc}\norm{\nabla\vf}\leq \alpha^{-1}H^2\tnorm{\vc}\tnorm{\vf}.
\end{equation}
The decomposition is stable in the sense that
\begin{equation}\label{e:stabquasi}
 \norm{\vc}^2+\norm{H^{-1}\vf}^2\lesssim \alpha^{-1}\tnorm{\vc+\vf}^2.
\end{equation}
\end{theorem}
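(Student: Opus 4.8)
The plan is to prove the quasi-orthogonality \eqref{e:orthoquasi} first and then deduce the stability \eqref{e:stabquasi} from it together with the $a$-orthogonality already established in Lemma~\ref{l:aod}.

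For \eqref{e:orthoquasi}, the starting point is the observation that \emph{both} interpolants may be subtracted for free. Since $\vf\in\Vf=\kernel(\IH\vert_{V_h})$ we have $\IH\vf=0$, and since $\IH\vc\in V_H$ while $\Vf$ is $L^2$-orthogonal to $V_H$ by Lemma~\ref{l:L2od}, we have $(\IH\vc,\vf)_{L^2(\Omega)}=0$. Hence
\begin{equation*}
(\vc,\vf)_{L^2(\Omega)}=(\vc-\IH\vc,\vf-\IH\vf)_{L^2(\Omega)}.
\end{equation*}
I would then split this into a sum over $T\in\triH$, apply the Cauchy--Schwarz inequality on each element, and invoke the $L^2$-approximation estimate from \eqref{e:interr} on both factors, which produces a factor $H_T$ from each. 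This is precisely where the two powers of $H$ originate, so this step is the heart of the argument. Bounding $H_T\le\norm{H}_{L^\infty(\Omega)}=:H$, applying the discrete Cauchy--Schwarz inequality in the sum over elements, and controlling the patch overlaps via \eqref{e:Col} yields $(\vc,\vf)_{L^2(\Omega)}\lesssim H^2\norm{\nabla\vc}\,\norm{\nabla\vf}$. The second inequality in \eqref{e:orthoquasi} is then immediate from the coercivity $\gmin\norm{\nabla w}^2\le\tnorm{w}^2$, which gives $\norm{\nabla\vc}\,\norm{\nabla\vf}\le\gmin^{-1}\tnorm{\vc}\tnorm{\vf}$.

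For the stability estimate \eqref{e:stabquasi} I would bound the two terms on the left separately. For the fine part, $\norm{H^{-1}\vf}^2=\sum_{T\in\triH}H_T^{-2}\norm{\vf}_{L^2(T)}^2$, and applying \eqref{e:interr} to $\vf=\vf-\IH\vf$ cancels the weight $H_T^{-2}$ exactly, leaving $\norm{H^{-1}\vf}^2\lesssim\norm{\nabla\vf}^2$ after using \eqref{e:Col}; coercivity then gives $\norm{H^{-1}\vf}^2\lesssim\gmin^{-1}\tnorm{\vf}^2$. For the coarse part, the Poincar\'e--Friedrichs inequality on $\Omega$ together with coercivity yields $\norm{\vc}^2\lesssim\norm{\nabla\vc}^2\le\gmin^{-1}\tnorm{\vc}^2$ (the Poincar\'e constant depends only on $\Omega$ and may be hidden in ``$\lesssim$'').

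Finally, I would combine the two bounds and invoke the $a$-orthogonality $a(\vc,\vf)=0$ from \eqref{e:aod}, which gives $\tnorm{\vc}^2+\tnorm{\vf}^2=\tnorm{\vc+\vf}^2$, to conclude
\begin{equation*}
\norm{\vc}^2+\norm{H^{-1}\vf}^2\lesssim\gmin^{-1}\bigl(\tnorm{\vc}^2+\tnorm{\vf}^2\bigr)=\gmin^{-1}\tnorm{\vc+\vf}^2.
\end{equation*}
The only genuinely delicate point is the quasi-orthogonality: one must exploit that both interpolation errors are controlled by gradients and that each contributes one power of $H$. Everything else is routine bookkeeping with Cauchy--Schwarz, the finite-overlap constant $\Col$, coercivity, and the Poincar\'e--Friedrichs inequality.
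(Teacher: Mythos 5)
Your proposal is correct and takes essentially the same route as the paper's own proof: the identical key identity $(\vc,\vf)_{L^2(\Omega)}=(\vc-\IH\vc,\vf-\IH\vf)_{L^2(\Omega)}$ (justified via Lemma~\ref{l:L2od} and $\IH\vf=0$) followed by Cauchy--Schwarz, \eqref{e:interr}, and \eqref{e:Col} for the quasi-orthogonality, and the same stability argument combining the weighted interpolation bound for $\norm{H^{-1}\vf}$, Friedrichs' inequality for $\norm{\vc}$, coercivity, and the energy orthogonality \eqref{e:stab}. You merely spell out the elementwise bookkeeping that the paper leaves implicit; there is no gap.
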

\begin{proof}
Given any $\vc\in\Vc$ and $\vf\in\Vf$,
Lemma~\ref{l:L2od} implies that $$(\IH\vc,\vf)_{L^2(\Omega)}=0.$$ Since $\IH\vf =0$, the Cauchy-Schwarz inequality, \eqref{e:interr}, and \eqref{e:Col} yield
\begin{align}\label{e:stabquasi1}
 (\vc,\vf)_{L^2(\Omega)}&=(\vc-\IH\vc,\vf-\IH\vf)_{L^2(\Omega)}\lesssim H^2\norm{\nabla\vc}\norm{\nabla\vf}.
\end{align}
This is the quasi-orthogonality. The same arguments show that
\begin{align*}
 (H^{-1}\vf,H^{-1}\vf)_{L^2(\Omega)}&=\left(H^{-1}(\vf-\IH\vf),H^{-1}(\vf-\IH\vf)\right)_{L^2(\Omega)}\\
&\lesssim\sum_{T\in\tri_H} \norm{\nabla\vf}_{L^2(\omega_T)}^2\\
&\lesssim \alpha^{-1}\tnorm{\vf}^2.
\end{align*}
This, Friedrichs' inequality
\begin{align*}
 \norm{\vc}&\leq \pi^{-1}\diam\Omega \norm{\nabla\vc},
\end{align*}
and \eqref{e:stab} readily prove the stability estimate.
\end{proof}

\section{Upscaled Approximation of Eigenvalues and Eigenfunctions}\label{s:approx}
This section presents a new scheme for the approximation of eigenvalues and eigenfunctions of \eqref{e:modelstrongh} or \eqref{e:modelstrong}. Section~\ref{ss:varfine} recalls the variational formulation and some characteristic properties of the problem. The new upscaled approximation is then introduced in Section~\ref{ss:varcoarse}.
\subsection{Variational formulation and fine scale discretization}\label{ss:varfine}
For problem \eqref{e:modelstrong}, there exists a countable number of eigenvalues $\lambda^{(\ell)}$ ($\ell\in\mathbb{N}$) and corresponding eigenfunctions $u^{(\ell)}\in V$. Recall their characterization as solutions of the variational problem
\begin{equation}\label{e:modelvar}
a(u^{(\ell)},v) = \lambda^{(\ell)}(u^{(\ell)},v)_{L^2(\Omega)}\quad\text{for all }v\in V.
\end{equation}
Since $A$ is symmetric, all eigenvalues are real and positive. They can be sorted ascending
$$0<\lambda^{(1)}\leq\lambda^{(2)}\leq\lambda^{(3)}\leq\ldots\;.$$
Depending on the actual domain $\Omega$ and the coefficient $A$, there may be multiple eigenvalues. A multiple eigenvalue is repeated several times according to its multiplicity in the enumeration above.
Let $u^{(\ell)}$ ($\ell\in\mathbb{N}$) be normalized to one in $L^2(\Omega)$, i.e.,  $\|u^{(\ell)}\| = 1$. It is well known that the eigenfunctions enjoy (or, in the case of multiple eigenvalues, may be chosen such that they fulfill) the orthogonality contraints
\begin{equation}\label{e:orthogonalitiesdisc}
 a(u^{(\ell)},u^{(m)}) = (u^{(\ell)},u^{(m)})_{L^2(\Omega)} = 0\quad\text{ if } \ell \neq m.
\end{equation}

The Rayleigh-Ritz discretization of \eqref{e:modelvar} with respect to the fine scale finite element space $V_h$ reads: find $\lambda_h^{(\ell)}\in\mathbb{R}$ and non-trivial $u_h^{(\ell)}\in V_h$ such that
\begin{equation}\label{e:modeldisch}
a(u_h^{(\ell)},v)=\lambda_h^{(\ell)}(u_h^{(\ell)},v)_{L^2(\Omega)}\quad\text{for all }v\in V_h.
\end{equation}
Since $V_h$ is a finite-dimensional subspace of $V$, we can order the discrete eigenvalues similar as the original ones
$$0<\lambda_h^{(1)}\leq\lambda_h^{(2)}\leq\lambda_h^{(3)}\leq\cdots\leq\lambda_h^{(N_h)}.$$
Again, multiple eigenvalues are repeated according to their multiplicity.
Let $u_h^{(\ell)}$ ($\ell=1,2,\ldots,N_h$) be normalized to one in $L^2(\Omega)$, i.e.,  $\|u_h^{(\ell)}\| = 1$. The discrete eigenfunctions satisfy (or, in the case of multiple eigenvalues, can be chosen such that they satisfy) the orthogonality contraints
\begin{equation}\label{e:orthogonalitiesh}
 a(u_h^{(\ell)},u_h^{(m)}) = (u_h^{(\ell)}, u_h^{(m)} )_{L^2(\Omega)} = 0\quad\text{ if } \ell \neq m.
\end{equation}
We do not intend to solve the fine scale eigenproblem \eqref{e:modeldisch}. We aim to approximate its eigenpairs $(\lambda_h^{(\ell)},u_h^{(\ell)})$ with the help of the coarse space $\Vc$ defined in Lemma~\ref{l:aod}.

\subsection{Coarse scale discretization}\label{ss:varcoarse}
Recall the definition of the coarse space
\begin{equation*}
 \Vc:=(1-\Pf)V_H
\end{equation*}
from Lemma~\ref{l:aod}. This means that $\Vc$ is the image of $V_H$ under the projection operator $1-\Pf$, where $\Pf$ is the $a$-orthogonal projection onto the space
\begin{equation*}
 \Vf:=\{v\in V_h\,\vert\,\IH v = 0\}.
\end{equation*}
Since the intersection of $V_H$ and $\Vf$ is the trivial subspace (cf. Lemma~\ref{l:L2od}), it holds
\begin{equation*}
\dim\Vc=\dim V_H=N_H.
\end{equation*}
Moreover, the images of the nodal basis functions $\phi_z$ ($z\in\N_H$) under $(1-\Pf)$ yield a basis of $\Vc$,
\begin{equation}\label{e:basisVc}
 \Vc = \operatorname{span}\{(1-\Pf)\phi_z\;\vert\;z\in\N_H\}.
\end{equation}

In order to actually compute those basis functions, we need to approximate $N_H$ solutions $\psi_z=\Pf\phi_z\in\Vf$ of
\begin{equation}\label{e:corrector}
 a(\psi_z,v)=a(\phi_z,v)\quad\text{for all }v\in\Vf.
\end{equation}
These problems are linear. The only difference to a standard Poisson problem is that there are some linear constraints hidden in the space $\Vf$, that is, the quasi-interpolation of trial and test functions vanishes. In practice, these constraints are realized using Lagrange multipliers.

The linear problems \eqref{e:corrector} may be solved in parallel. Moreover, Section~\ref{ss:compression} below will show that these linear problems may be restricted to local subdomains of diameter $\approx|\log(H)| H$ centered around the coarse vertex $z$, so that the complexity of solving all corrector problems exceeds to the cost of solving one linear Poisson problem on the fine mesh only by a factor that depends algebraically on $|\log(H)|$.

The Rayleigh-Ritz discretization of \eqref{e:modeldisch} (and \eqref{e:modelvar}) with respect to the generalized finite element space $\Vc$ reads: find $\lambda_H^{(\ell)}\in\mathbb{R}$ and non-trivial $\uc^{(\ell)}\in\Vc$ such that
\begin{equation}\label{e:modeldisc}
a(\uc^{(\ell)},v)=\lambda_H^{(\ell)}(\uc^{(\ell)},v)_{L^2(\Omega)}\quad\text{for all }v\in \Vc.
\end{equation}
The assembly of the corresponding finite element stiffness and mass matrices requires only the evaluation of the corrector functions $\psi_z=\Pf\phi_z\in\Vf$ computed previously. In genereal, these matrices are not sparse. However, either the dimension of the coarse problem $N_H\ll N_h$ is so small that the lack of sparsity is not an issue or the matrices may be approximated by sparse matrices with negligible loss of accuracy (see Section~\ref{ss:compression} below).

The discrete eigenvalues are ordered (multiple eigenvalues are repeated according to their multiplicity)
$$0<\lambda_H^{(1)}\leq\lambda_H^{(2)}\leq\lambda_H^{(3)}\leq\cdots\leq\lambda_H^{(N_H)}.$$
Let also $\uc^{(\ell)}$ ($\ell=1,2,\ldots,N_H$) be normalized to one in $L^2(\Omega)$, i.e.,  $(\uc^{(\ell)},\uc^{(\ell)})_{L^2(\Omega)} = 1$. The discrete eigenfunctions satisfy (or, in the case of multiple eigenvalues, can be chosen such that they satisfy) the orthogonality contraints
\begin{equation}\label{e:orthogonalities}
 a(\uc^{(\ell)},\uc^{(m)}) = (\uc^{(\ell)}, \uc^{(m)} )_{L^2(\Omega)} = 0\quad\text{ if } \ell \neq m.
\end{equation}

\section{Error analysis}\label{s:error}
In the subsequent paragraphs we will present error bounds for the approximate eigenvalues and eigenfunctions based on the variational techniques from \cite{MR0443377} (which are based on \cite{MR0203926} on their part); see also \cite{MR2652780}.

\subsection{Two-scale decomposition revisited}
The eigenfunctions allow a different (with respect to Section~\ref{s:omd}) characterization of a macroscopic function, that is, any function spanned by eigenfunctions related to the $\ell$ lowermost eigenvalues.
Define
\begin{equation}\label{e:El}
 E_{\ell}:=\operatorname{span}\{u_h^{(1)},\ldots,u_h^{(\ell)}\}.
\end{equation}
We will have a closer look at the quasi-orthogonality result of Lemma~\ref{l:aod} given some macroscopic function $u\in E_{\ell}$.
\begin{lemma}[$L^2$-quasi-orthogonality of the $a$-orthogonal decomposition of macroscopic functions]\label{c:L2od}
Let $\ell\in\mathbb{N}$ and let $u=\uc+\uf\in E_{\ell}$ with $\norm{u}=1$, where $\uc\in\Vc$ (resp. $\uf\in\Vf$) denotes the coarse scale part (resp. fine scale part) of $u$ according to the $a$-orthogonal decomposition in  Lemma~\ref{l:aod}.
Then it holds
\begin{eqnarray}
\tnorm{\uc}&\leq& \sqrt{\lambda_h^{(\ell)}},\label{e:quasi1}\\
\tnorm{\uf}&\lesssim& \ell\;\frac{\bigl(\lambda_h^{(\ell)}\bigr)^{3/2}}{\alpha}\;H^2,\;\text{ and}\label{e:quasi2}\\
 |(\uc,\uf)_{L^2(\Omega)}|&\lesssim&\ell\left(\frac{\lambda_h^{(\ell)}}{\alpha}\right)^2H^4.\label{e:quasi3}
\end{eqnarray}
\end{lemma}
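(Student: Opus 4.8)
The plan is to establish the three bounds in sequence, as \eqref{e:quasi1} and \eqref{e:quasi2} feed directly into \eqref{e:quasi3}. For the first bound I would expand the macroscopic function in the discrete eigenbasis: writing $u=\sum_{j=1}^{\ell}c_j u_h^{(j)}\in E_{\ell}$ and using the $L^2$- and $a$-orthonormality \eqref{e:orthogonalitiesh}, the normalization $\norm{u}=1$ forces $\sum_{j}c_j^2=1$, while $\tnorm{u}^2=\sum_{j}c_j^2\,\lambda_h^{(j)}\le\lambda_h^{(\ell)}$ because the eigenvalues increase. Since the $a$-orthogonal splitting $u=\uc+\uf$ of Lemma~\ref{l:aod} is stable in the sense of \eqref{e:stab}, one gets $\tnorm{\uc}\le\tnorm{u}\le\sqrt{\lambda_h^{(\ell)}}$, which is \eqref{e:quasi1}.

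The core of the argument is \eqref{e:quasi2}. I would begin from the Galerkin identity $\tnorm{\uf}^2=a(\uf,\uf)=a(u,\uf)$, valid because $\uf=\Pf u\in\Vf$ while $a(\uc,\uf)=0$, and again expand $u$ and use the eigenvalue relation \eqref{e:modeldisch} (with the admissible test function $\uf\in\Vf\subset V_h$) to write $\tnorm{\uf}^2=\sum_{j=1}^{\ell}c_j\,\lambda_h^{(j)}\,(u_h^{(j)},\uf)_{L^2(\Omega)}$. The crucial estimate is the one on the mixed inner product $(u_h^{(j)},\uf)_{L^2(\Omega)}$: here I would replay the computation behind \eqref{e:stabquasi1}. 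Since $\IH\uf=0$ and since $V_H$ and $\Vf$ are $L^2$-orthogonal (Lemma~\ref{l:L2od}), subtracting $\IH u_h^{(j)}$ is free, so $(u_h^{(j)},\uf)_{L^2(\Omega)}=(u_h^{(j)}-\IH u_h^{(j)},\uf-\IH\uf)_{L^2(\Omega)}$, and the local Cl\'ement estimate \eqref{e:interr} together with the finite-overlap bound \eqref{e:Col} gives $|(u_h^{(j)},\uf)_{L^2(\Omega)}|\lesssim H^2\norm{\grad u_h^{(j)}}\,\norm{\grad\uf}$. Using coercivity in the form $\norm{\grad v}\le\alpha^{-1/2}\tnorm{v}$ and $\tnorm{u_h^{(j)}}=\sqrt{\lambda_h^{(j)}}$ turns this into $|(u_h^{(j)},\uf)_{L^2(\Omega)}|\lesssim\alpha^{-1}H^2\sqrt{\lambda_h^{(j)}}\,\tnorm{\uf}$. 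Substituting, cancelling one power of $\tnorm{\uf}$, and bounding the remaining sum crudely via $|c_j|\le1$ and $\lambda_h^{(j)}\le\lambda_h^{(\ell)}$ yields $\tnorm{\uf}\lesssim\alpha^{-1}H^2\sum_{j=1}^{\ell}|c_j|\,(\lambda_h^{(j)})^{3/2}\le\ell\,\alpha^{-1}(\lambda_h^{(\ell)})^{3/2}H^2$, which is \eqref{e:quasi2}.

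The last bound \eqref{e:quasi3} is then immediate from the quasi-orthogonality \eqref{e:orthoquasi}, which gives $|(\uc,\uf)_{L^2(\Omega)}|\lesssim\alpha^{-1}H^2\tnorm{\uc}\,\tnorm{\uf}$; inserting \eqref{e:quasi1} and \eqref{e:quasi2} produces the claimed $\ell\,(\lambda_h^{(\ell)}/\alpha)^2H^4$. I expect the only genuine obstacle to be the mixed-product estimate inside \eqref{e:quasi2}: one must combine the cancellation $\IH\uf=0$ with the $L^2$-orthogonality of $V_H$ and $\Vf$ so that the \emph{full} eigenfunction $u_h^{(j)}$ (and not merely its coarse part) can be tested against $\uf$ while still gaining the full factor $H^2$. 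The remaining bookkeeping --- whether to use $|c_j|\le1$ to get the factor $\ell$ or a Cauchy--Schwarz step to get $\sqrt{\ell}$, and the tracking of the powers of $\alpha$ --- is routine.
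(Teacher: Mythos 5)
Your proposal is correct and follows essentially the same route as the paper's own proof: the eigenbasis expansion with $|c_j|\le 1$, the identity $\tnorm{\uf}^2=\sum_j c_j\lambda_h^{(j)}(u_h^{(j)},\uf)_{L^2(\Omega)}$, the mixed-product estimate via $\IH\uf=0$ together with the $L^2$-orthogonality of $V_H$ and $\Vf$ and the local Cl\'ement bound \eqref{e:interr} with \eqref{e:Col}, and finally \eqref{e:quasi3} from Theorem~\ref{t:quasiL2} combined with \eqref{e:quasi1}--\eqref{e:quasi2}. The only cosmetic difference is that you justify \eqref{e:quasi1} via the stability identity \eqref{e:stab} while the paper invokes the projection property of $1-\Pf$, which is the same fact.
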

\begin{proof}
Let $\delta_j\leq 1$, $j=1,2,\ldots,\ell$, be the coefficients in the representation of $u$ by eigenfunctions, that is, $u=\sum_{j=1}^\ell \delta_j u_h^{(j)}$. Then \eqref{e:quasi1} follows from the fact that $(1-\Pf)$ is a projection and the obvious bound $\tnorm{u}^2\leq \lambda_h^{(\ell)}$.

For the proof of \eqref{e:quasi2}, we employ some algebraic manipulations and equation \eqref{e:modeldisch},
\begin{equation}\label{e:est1}
 \tnorm{\uf}^2=a(u,\uf)=\sum_{j=1}^\ell \delta_j a(u_h^{(j)},\uf)=\sum_{j=1}^\ell \delta_j \lambda_h^{(j)} (u_h^{(j)},\uf)_{L^2(\Omega)}.
\end{equation}
Lemma~\ref{l:L2od}, the Cauchy-Schwarz inequality, \eqref{e:interr}, and \eqref{e:Col} yield
\begin{equation}\label{e:est2}
 (u_h^{(j)},\uf)_{L^2(\Omega)}\lesssim \alpha^{-1}H^2\tnorm{u_h^{(j)}}\tnorm{\uf}
\end{equation}
(cf. \eqref{e:stabquasi1}). The combination of \eqref{e:est1}-\eqref{e:est2}, $\tnorm{u_h^{(j)}}^2=\lambda_h^{(j)}\leq\lambda_h^{(\ell)}$ and
 $\delta_j\leq 1$ yields the upper bound of $\tnorm{\uf}$.

The inequality \eqref{e:quasi3} follows readily from Theorem~\ref{t:quasiL2} and the bounds \eqref{e:quasi1}--\eqref{e:quasi2}.
\end{proof}
\begin{remark}[Improved $L^2$-quasi-orthogonality under regularity]\label{r:smooth} Consider the full space space $V_h=V$. Then, in certain cases, e.g., if $\Omega$ is convex and the coefficient $A$ is constant, we have that any macroscopic function $u\in E_{\ell}$ is in $\H^2(\Omega)$ and $\norm{\nabla^2 u}\lesssim\lambda^{(\ell)}/\alpha\norm{u}$. Such an instance of regularity gives rise to an additional power of $H\lambda^{(\ell)}/\alpha$ in the estimates \eqref{e:quasi2} and \eqref{e:quasi3} in Lemma~\ref{c:L2od}.
This is due to the approximation property
\begin{equation}\label{e:interr2}
 \|v-\IH v\|\lesssim H^2\| v\|_{\text{H}^2(\Omega)}
\end{equation}
for $v\in V\cap\H^2(\Omega)$, and the possible modification
\begin{equation*}
 (u^{(j)},\uf)_{L^2(\Omega)}=(u^{(j)}-\IH u^{(j)},\uf-\IH\uf)_{L^2(\Omega)}\lesssim \frac{H^3\lambda^{(j)}}{\alpha^{2}}\tnorm{\uf}
\end{equation*}
of \eqref{e:est2}.
\end{remark}

\subsection{Estimates for approximate eigenvalues}
We first introduce the Rayleigh quotient, which is defined for non-trivial $v\in V_h$ by
\begin{equation*}
 R(v):=\frac{a(v,v)}{(v,v)}.
\end{equation*}
Recall that the $\ell$th eigenvalue of \eqref{e:modeldisch} is characterized via the minmax-principle (which goes back to Poincar\'e \cite{MR1505534})
\begin{equation}\label{e:minmax}
 \lambda_h^{(\ell)}=\min_{S\in\mathcal{S}_\ell(V_h)}\max_{v\in S\setminus\{0\}} R(v),
\end{equation}
where $\mathcal{S}_\ell(V)$ denotes the set of $\ell$-dimensional subspaces of $V_h$.
This principle applies equally well to the coarse problem \eqref{e:modeldisc}, i.e.,
\begin{equation}\label{e:minmaxdisc}
 \lambda_H^{(\ell)}=\min_{S\in\mathcal{S}_\ell(\Vc)}\max_{v\in S\setminus\{0\}} R(v)
\end{equation}
characterizes the $\ell$th discrete eigenvalue ($\ell\leq N_H$).
The conformity $\Vc\subset V_h(\subseteq V)$ yields monotonicity
\begin{equation}\label{e:monotonicity}
(\lambda^{(\ell)}\leq\;)\; \lambda_h^{(\ell)}\leq \lambda_H^{(\ell)}\quad\text{for all }\ell=1,2,\ldots,N_H.
\end{equation}
The following theorem gives an estimate in the opposite direction.
\begin{theorem}[Bound for the eigenvalue error]\label{t:lower}
 Let $H$ be sufficiently small so that $H\lesssim \ell^{-1/4}\sqrt{\frac{\alpha}{\lambda^{(\ell)}_h}}$. Then it holds that
\begin{equation}\label{e:eigenvalueest}
 \frac{\lambda_H^{(\ell)}-\lambda_h^{(\ell)}}{\lambda_h^{(\ell)}}\lesssim \ell\left(\frac{\lambda_h^{(\ell)}}{\alpha}\right)^2H^4\quad\text{for all }\ell=1,2,\ldots,N_H.
\end{equation}
\end{theorem}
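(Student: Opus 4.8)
The plan is to bound $\lambda_H^{(\ell)}$ from above by feeding the min-max characterization \eqref{e:minmaxdisc} a carefully chosen $\ell$-dimensional trial space inside $\Vc$. The lower bound $\lambda_h^{(\ell)}\leq\lambda_H^{(\ell)}$ is already free from conformity \eqref{e:monotonicity}, so only the opposite direction is at stake. The natural trial space is the image $S_\ell:=(1-\Pf)E_\ell\subset\Vc$ of the span $E_\ell$ of the first $\ell$ fine-scale eigenfunctions under the coarse projection from Lemma~\ref{l:aod}. Every $v\in S_\ell$ is of the form $v=\uc=(1-\Pf)u$ for some $u\in E_\ell$ with fine-scale remainder $\uf=\Pf u$, and since the Rayleigh quotient $R$ is scale invariant I may normalise $\norm{u}=1$. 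The whole estimate then reduces to bounding $R(\uc)=\tnorm{\uc}^2/\norm{\uc}^2$ uniformly over $u\in E_\ell$ and inserting the result into \eqref{e:minmaxdisc}.

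For the numerator, $a$-orthogonality \eqref{e:stab} gives $\tnorm{\uc}^2\leq\tnorm{u}^2$, and expanding $u=\sum_{j=1}^{\ell}\delta_j u_h^{(j)}$ with $\sum_j\delta_j^2=1$ yields $\tnorm{u}^2=\sum_j\delta_j^2\lambda_h^{(j)}\leq\lambda_h^{(\ell)}$; this is exactly \eqref{e:quasi1}. For the denominator I expand the $L^2$-norm of the coarse part as
\begin{equation*}
 \norm{\uc}^2=\norm{u}^2-2(u,\uf)_{L^2(\Omega)}+\norm{\uf}^2\geq 1-2|(u,\uf)_{L^2(\Omega)}|,
\end{equation*}
so that only the cross term must be controlled. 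Here Lemma~\ref{c:L2od} does the decisive work: together with the high-frequency bound $\norm{\vf}\lesssim\alpha^{-1/2}H\tnorm{\vf}$ for $\vf\in\Vf$ implicit in Theorem~\ref{t:quasiL2} and the energy estimate \eqref{e:quasi2} for $\tnorm{\uf}$, the $L^2$-quasi-orthogonality \eqref{e:quasi3} shows that $|(u,\uf)_{L^2(\Omega)}|\lesssim\ell(\lambda_h^{(\ell)}/\alpha)^2H^4=:\mu$. Consequently $\norm{\uc}^2\geq 1-C\mu$ with a constant $C$ hidden in $\lesssim$. The smallness hypothesis $H\lesssim\ell^{-1/4}\sqrt{\alpha/\lambda_h^{(\ell)}}$ is precisely what forces $\mu\lesssim 1$, hence (for a suitable constant) $C\mu\leq\tfrac12$; this simultaneously guarantees $\norm{\uc}^2\geq\tfrac12>0$, so that $(1-\Pf)$ is injective on $E_\ell$ and $\dim S_\ell=\ell$, as required for \eqref{e:minmaxdisc}.

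Combining the two bounds gives $R(\uc)\leq\lambda_h^{(\ell)}/(1-C\mu)$ uniformly over the unit sphere of $E_\ell$, hence over all $v\in S_\ell\setminus\{0\}$, and \eqref{e:minmaxdisc} yields $\lambda_H^{(\ell)}\leq\max_{v\in S_\ell}R(v)\leq\lambda_h^{(\ell)}/(1-C\mu)$. The claim \eqref{e:eigenvalueest} then follows from the elementary estimate $x/(1-x)\leq 2x$ valid for $x=C\mu\leq\tfrac12$, which turns $\lambda_H^{(\ell)}-\lambda_h^{(\ell)}\leq\lambda_h^{(\ell)}\,x/(1-x)$ into the desired relative bound. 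I expect the only genuinely substantive point to be the denominator estimate, i.e.\ extracting the \emph{full} fourth-order smallness of the $L^2$-defect $|(u,\uf)_{L^2(\Omega)}|$ from Lemma~\ref{c:L2od}; everything else is bookkeeping with the scale-invariant Rayleigh quotient, the min-max principle, and the geometric-series inequality. One should also double-check that the estimates of Lemma~\ref{c:L2od} hold uniformly over the unit sphere of $E_\ell$ — they do, being stated for arbitrary $u\in E_\ell$ with $\norm{u}=1$ — so that the bound on $R(\uc)$ is genuinely uniform across the trial space $S_\ell$.
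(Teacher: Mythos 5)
Your strategy coincides with the paper's: the paper also feeds the trial space $(1-\Pf)E_\ell$ into the min-max principle \eqref{e:minmaxdisc}, bounds the energy of the coarse part via \eqref{e:quasi1}, and reduces everything to the smallness of the $L^2$-defect $\sigma_H^{(\ell)}:=\max_{u}|\norm{\uf}^2+2(\uc,\uf)_{L^2(\Omega)}|$, which is exactly your quantity $2(u,\uf)_{L^2(\Omega)}-\norm{\uf}^2$; the Rayleigh-quotient bookkeeping that you carry out by hand is outsourced in the paper to Lemma~6.1 of \cite{MR0443377}, and your final step $x/(1-x)\leq 2x$ is the paper's $(1-\sigma_H^{(\ell)})^{-1}\leq 1+2\sigma_H^{(\ell)}$. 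So there is no difference in route, but there is a genuine quantitative gap in the one step you yourself call decisive.

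The claim $|(u,\uf)_{L^2(\Omega)}|\lesssim\ell(\lambda_h^{(\ell)}/\alpha)^2H^4=:\mu$ does not follow from the three ingredients you cite. Splitting $(u,\uf)_{L^2(\Omega)}=(\uc,\uf)_{L^2(\Omega)}+\norm{\uf}^2$, estimate \eqref{e:quasi3} controls the first summand by $\mu$, but your control of the second \emph{squares} \eqref{e:quasi2}: $\norm{\uf}^2\lesssim\alpha^{-1}H^2\tnorm{\uf}^2\lesssim\ell^2(\lambda_h^{(\ell)}/\alpha)^3H^6$. The ratio of this term to $\mu$ is $\ell\,(\lambda_h^{(\ell)}/\alpha)H^2$, and under the stated hypothesis $H^2\lesssim\ell^{-1/2}\alpha/\lambda_h^{(\ell)}$ this ratio is only $\lesssim\sqrt{\ell}$, not $\lesssim 1$. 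Hence your argument yields $|(u,\uf)_{L^2(\Omega)}|\lesssim\ell^{3/2}(\lambda_h^{(\ell)}/\alpha)^2H^4$, i.e.\ \eqref{e:eigenvalueest} with $\ell^{3/2}$ in place of $\ell$; worse, for large $\ell$ the stated hypothesis no longer forces $\norm{\uc}^2\geq\tfrac12$ (you would need the stronger condition $H\lesssim\ell^{-3/8}\sqrt{\alpha/\lambda_h^{(\ell)}}$), so the injectivity of $1-\Pf$ on $E_\ell$ and your geometric-series step are not secured either. The repair is exactly what the paper does: do not square \eqref{e:quasi2}. Estimate the cross term directly via $(u,\uf)_{L^2(\Omega)}=(u-\IH u,\uf-\IH\uf)_{L^2(\Omega)}\lesssim H^2\norm{\nabla u}\,\norm{\nabla\uf}\lesssim\alpha^{-1}H^2\tnorm{u}\,\tnorm{\uf}$, using Lemma~\ref{l:L2od}, $\IH\uf=0$, \eqref{e:interr} and \eqref{e:Col}, and then pair the single factor $\tnorm{u}\leq(\lambda_h^{(\ell)})^{1/2}$ with a single application of \eqref{e:quasi2} to $\tnorm{\uf}$. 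This gives $|(u,\uf)_{L^2(\Omega)}|\lesssim\mu$ with no spurious higher-order term, after which the rest of your write-up goes through verbatim.
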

\begin{proof}
Recall the definition of $E_{\ell}$ in \eqref{e:El} and
define
\begin{equation*}
 \sigma_H^{(\ell)}:=\max_{u\in E_{\ell}:(u,u)_{L^2(\Omega)}=1}|(\uf,\uf)_{L^2(\Omega)}+2(\uc,\uf)_{L^2(\Omega)}|,
\end{equation*}
where $\uc\in\Vc$ (resp. $\uf\in\Vf$) denotes the coarse scale part (resp. fine scale part) of $u\in E_{\ell}$ according to the $a$-orthogonal decomposition in  Lemma~\ref{l:aod}.
The $L^2$-norm of $\uf$ satisfies the estimate
\begin{eqnarray*}
\norm{\uf}^2&=& (u,\uf)_{L^2(\Omega)}-(\uc,\uf)_{L^2(\Omega)}\\&=&(u-\IH u,\uf-\IH \uf)_{L^2(\Omega)}-(\uc,\uf)_{L^2(\Omega)}\\&\lesssim&\ell\left(\frac{\lambda_h^{(\ell)}}{\alpha}\right)^{2}H^4 + |(\uc,\uf)_{L^2(\Omega)}|,
\end{eqnarray*}
which follows from Lemma~\ref{l:L2od}, \eqref{e:interr}, and \eqref{e:Col}. Hence, Lemma~\ref{c:L2od} shows that
\begin{equation*}
 \sigma_H^{(\ell)}\lesssim \ell\left(\frac{\lambda_h^{(\ell)}}{\alpha}\right)^2 H^4.
\end{equation*}
If $H$ is chosen small enough so that $\sigma_H^{(\ell)}\leq \tfrac{1}{2}$ (i.e., $H\lesssim \ell^{-1/4}\sqrt{\frac{\alpha}{\lambda^{(\ell)}}}$), then Lemma~6.1 in \cite{MR0443377} shows that
\begin{equation*}
\lambda_H^{(\ell)}\leq (1-\sigma_H^{(\ell)})^{-1}\lambda_h^{(\ell)}\leq (1+2\sigma_H^{(\ell)})\lambda_h^{(\ell)}.
\end{equation*}
Inserting our estimate for $\sigma_H^{(\ell)}$ readily yields the assertion.
\end{proof}
The triangle inequality allows to control the approximation error with respect to the continuous eigenvalues \eqref{e:modelvar} by
\begin{equation*}
 \lambda_H^{(\ell)}-\lambda^{(\ell)}\lesssim \lambda_h^{(\ell)}-\lambda^{(\ell)} + \ell\;\frac{\bigl(\lambda_h^{(\ell)}\bigr)^3}{\alpha^2}\;H^4.
\end{equation*}
The first term $\lambda_h^{(\ell)}-\lambda^{(\ell)}$ depends on the choice of the space $V_h$ and the regularity of corresponding eigenfunctions in the usual way.

\begin{remark}[Improved eigenvalue error bound for smooth eigenfunction] With regard to Remark~\ref{r:smooth}, the error bound in Theorem~\ref{t:lower} may be improved in the ideal case $V=V_h$ provided that the first $\ell$ eigenfunctions are regular in the sense of $\norm{\nabla^2 u^{(j)}}\lesssim\lambda^{(j)}/\alpha$. The improved bound reads
\begin{equation}\label{e:eigenvalueestsmooth}
 \frac{\lambda_H^{(\ell)}-\lambda^{(\ell)}}{\lambda^{(\ell)}}\lesssim \ell\left(\frac{\lambda^{(\ell)}}{\alpha}\right)^3H^5\quad\text{for all }\ell=1,2,\ldots,N_H.
\end{equation}
This improved bound applies also to the case where $V_h$ is a finite element space if $h$ is sufficiently small.

The improved bound might still be pessimistic in the sense that the error in the $\ell$th eigenvalue/vector depends on the regularity of all previous eigenfunctions. The recent theory \cite{MR2206452} shows that this is not necessarily true.
Moreover, there might be smoothness also in the single summands of the two-scale decomposition which is not exploited.
\end{remark}

\subsection{Estimates for approximate eigenfunctions}
We turn to the error in the approximate eigenfunctions. Again, we follow the receipt provided in \cite{MR0443377}.
\begin{theorem}[Bound for the eigenfunction error]\label{t:eigenvector}
Let $\lambda_h^{(\ell)}$ be an eigenvalue of multiplicity $r$, i.e.,
$\lambda_h^{(\ell)}=\dots =\lambda_h^{(\ell+r-1)}$ with corresponding
eigenspace spanned by the orthonormal basis
$\{u_h^{(\ell+j)}\}_{j=0}^{r-1}$. Let the pairs
$\left(\lambda_H^{(\ell)},\uc^{(\ell)}\right),\ldots,\left(\lambda_H^{(\ell+r-1)},\uc^{(\ell+r-1)}\right)$
be the Rayleigh-Ritz approximations solving equation (\ref{e:modeldisc})
with $\|\uc^{(\ell+j)}\|=1$ for $j=0,1,\ldots,r-1$. If $\ell+r-1\leq N_H$ and if $H\lesssim \ell^{-1/3}(1+\rho)^{-1/3}\sqrt{\alpha/\lambda^{(\ell)}_h}$ is sufficiently small, then 
there exist an orthonormal basis of
$\operatorname{span}(\{u_h^{(\ell+j)}\}_{j=0}^{r-1})$, let us denote the
basis functions $\tilde{u}_h^{(\ell+j)}$, such that for all
$j=0,1,\ldots,r-1$,
\begin{eqnarray}\label{e:eigenvector1}
\qquad\tnorm{\tilde{u}_h^{(\ell+j)}-\uc^{(\ell+j)}}&\lesssim&
\sqrt{\ell}\;\frac{(\lambda_h^{(\ell)})^{3/2}}{\alpha}\;H^2+\ell (1+\rho)\frac{(\lambda_h^{(\ell)})^2}{\alpha^{3/2}}\;H^3,\\
\|\tilde{u}_h^{(\ell+j)}-\uc^{(\ell+j)}\|&\lesssim& \ell(1+\rho) \left(\frac{\lambda_h^{(\ell)}}{\alpha}\right)^{3/2}H^3,\label{e:eigenvector2}
\end{eqnarray}
where $\rho:=\max_{j\not\in \{\ell,\ell+1,\dots,\ell+r-1\}}\frac{\lambda_h^{(\ell)}}{|\lambda_h^{(\ell)}-\lambda_H^{(j)}|}$.
\end{theorem}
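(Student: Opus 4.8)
The plan is to run the abstract spectral approximation machinery of Babuška and Osborn \cite{MR0443377}, feeding into it the quasi-orthogonality estimates of Section~\ref{s:omd} and the eigenvalue bound of Theorem~\ref{t:lower}. The natural framework is operator-theoretic. Introduce the fine solution operator $T\colon L^2(\Omega)\to V_h$ defined by $a(Tf,v)=(f,v)_{L^2(\Omega)}$ for all $v\in V_h$, and its coarse Galerkin analogue $T_H\colon L^2(\Omega)\to\Vc$ defined by the same variational equation but tested only against $v\in\Vc$. Since $\Vc=(1-\Pf)V_h$ and $1-\Pf$ is the $a$-orthogonal projection onto $\Vc$, one gets the clean identity $T-T_H=\Pf T$, so the whole operator error is carried by the fine-scale projection $\Pf$. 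The fine eigenpairs $(\lambda_h^{(k)},u_h^{(k)})$ are the eigenpairs of $T$ (with eigenvalue $1/\lambda_h^{(k)}$) and the coarse eigenfunctions $\uc^{(\ell+j)}$ are the eigenfunctions of $T_H$. I would let $E$ denote the (simultaneously $L^2$- and $a$-orthogonal) spectral projection onto the $r$-dimensional cluster eigenspace $\operatorname{span}\{u_h^{(\ell+j)}\}_{j=0}^{r-1}$.

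The two scalar quantities that drive the Babuška--Osborn estimates are the energy- and $L^2$-approximability of the cluster eigenspace by $\Vc$. For a unit $L^2$-combination $w$ of the $u_h^{(\ell+j)}$, its best approximant in $\Vc$ is $(1-\Pf)w$ and the error is $\Pf w$; repeating the computation \eqref{e:est1}--\eqref{e:est2} over the $r\le\ell$ cluster modes (with one Cauchy--Schwarz step producing the factor $\sqrt{\ell}$) gives $\tnorm{\Pf w}\lesssim \sqrt{\ell}\,(\lambda_h^{(\ell)})^{3/2}\alpha^{-1}H^2$, and the $\Vf$-estimate $\norm{\Pf w}\lesssim H\alpha^{-1/2}\tnorm{\Pf w}$ (a consequence of $\IH\Pf w=0$ and \eqref{e:interr}) upgrades this to $\norm{\Pf w}\lesssim \sqrt{\ell}\,(\lambda_h^{(\ell)})^{3/2}\alpha^{-3/2}H^3$. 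The energy bound produces the leading term of \eqref{e:eigenvector1}. I would fix the target basis $\tilde u_h^{(\ell+j)}$ by orthonormalizing the $L^2$-projections $E\uc^{(\ell+j)}$; the smallness hypothesis on $H$ guarantees the corresponding Gram matrix is $I+O(H^6)$, hence invertible, so that $E$ identifies the approximate cluster with the exact one and this basis exists.

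The heart of the matter is the out-of-cluster leakage $(1-E)\uc^{(\ell+j)}$, which carries the gap factor $\rho$. Writing $\mu_H=1/\lambda_H^{(\ell+j)}$, the relation $T_H\uc^{(\ell+j)}=\mu_H\uc^{(\ell+j)}$ and $TE=ET$ give the resolvent identity $(T-\mu_H)(1-E)\uc^{(\ell+j)}=(1-E)(T-T_H)\uc^{(\ell+j)}$, whence $\norm{(1-E)\uc^{(\ell+j)}}\lesssim \operatorname{dist}(\mu_H,\sigma(T)\setminus\{1/\lambda_h^{(\ell)}\})^{-1}\,\norm{(T-T_H)\uc^{(\ell+j)}}$, and the reciprocal distance is exactly what condenses into $(1+\rho)$ once one returns from the $\mu$-scale to the $\lambda$-scale. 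The crucial point is that $\norm{(T-T_H)\uc^{(\ell+j)}}=\norm{\Pf T\uc^{(\ell+j)}}$ is of order $H^3$, not $H$: testing the defining equation of $\Pf T\uc^{(\ell+j)}\in\Vf$ against itself yields $\tnorm{\Pf T\uc^{(\ell+j)}}^2=(\uc^{(\ell+j)},\Pf T\uc^{(\ell+j)})_{L^2(\Omega)}$, a $\Vc$--$\Vf$ pairing bounded via Theorem~\ref{t:quasiL2} by $\alpha^{-1}H^2\tnorm{\uc^{(\ell+j)}}\tnorm{\Pf T\uc^{(\ell+j)}}$, which together with $\tnorm{\uc^{(\ell+j)}}\lesssim\sqrt{\lambda_h^{(\ell)}}$ and the $\Vf$-estimate gives $\norm{\Pf T\uc^{(\ell+j)}}\lesssim (\lambda_h^{(\ell)})^{1/2}\alpha^{-3/2}H^3$. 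This yields the $\rho$-term of \eqref{e:eigenvector2}. For the corresponding energy leakage I would avoid estimating mode by mode (the per-mode bound $\tnorm{\Pf u_h^{(k)}}\lesssim (\lambda_h^{(k)})^{3/2}\alpha^{-1}H^2$ diverges when summed over the tail) and instead use the algebraic identity $\tnorm{(1-E)\uc^{(\ell+j)}}^2=\lambda_H^{(\ell+j)}\norm{(1-E)\uc^{(\ell+j)}}^2-\lambda_H^{(\ell+j)}(\uc^{(\ell+j)},\Pf(1-E)\uc^{(\ell+j)})_{L^2(\Omega)}$, obtained from $a(\uc^{(\ell+j)},u_h^{(k)})=\lambda_h^{(k)}(\uc^{(\ell+j)},u_h^{(k)})_{L^2(\Omega)}$ after replacing $u_h^{(k)}$ by its $a$-projection $(1-\Pf)u_h^{(k)}\in\Vc$; bounding the last pairing once more by Theorem~\ref{t:quasiL2} and solving the resulting quadratic inequality converts the $H^3$ $L^2$-leakage into the $H^3$ energy contribution of \eqref{e:eigenvector1}.

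Assembling the in-cluster and out-of-cluster parts by the triangle inequality and using $\lambda_H^{(\ell+j)}\approx\lambda_h^{(\ell)}$ from Theorem~\ref{t:lower} then gives both \eqref{e:eigenvector1} and \eqref{e:eigenvector2}; the threshold $H\lesssim \ell^{-1/3}(1+\rho)^{-1/3}\sqrt{\alpha/\lambda_h^{(\ell)}}$ is precisely what renders the $\ell(1+\rho)H^3$ leakage smaller than the spectral separation, so that the construction of $\{\tilde u_h^{(\ell+j)}\}$ and the resolvent bound are both legitimate. I expect the main obstacle to be exactly this tail control: the gap-weighted resolvent estimate must be coupled with the \emph{improved} $O(H^3)$ bound for $\norm{(T-T_H)\uc^{(\ell+j)}}$ — which in turn rests on applying the $L^2$-quasi-orthogonality of Theorem~\ref{t:quasiL2} to the smoother function $T\uc^{(\ell+j)}$ rather than to the rough eigenfunctions — because a naive summation over the full discrete spectrum does not converge.
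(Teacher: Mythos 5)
Your proposal is sound in substance, and it takes a genuinely different route from the paper, whose proof is only a few lines because it outsources the cluster analysis: the paper invokes \cite[Lemma 6.4 and Theorem 6.2]{MR0443377} (Strang--Fix, incidentally, not Babu\v{s}ka--Osborn) to produce, for each $j$, some $\tuc^{(\ell+j)}\in\operatorname{span}\{\uc^{(\ell+i)}\}_{i=0}^{r-1}$ with $\|u_h^{(\ell+j)}-\tuc^{(\ell+j)}\|\leq(1+\rho)\|\Pf u_h^{(\ell+j)}\|$, bounds $\|\Pf u_h^{(\ell+j)}\|\lesssim\ell(\lambda_h^{(\ell)}/\alpha)^{3/2}H^3$ via Lemma~\ref{c:L2od} and \eqref{e:interr}, replaces the nearly orthogonal change of basis by an orthogonal one (your Gram-matrix step, run in the opposite direction: the paper approximates the fine basis from the coarse eigenfunctions and then inverts the transformation, you project the coarse eigenfunctions onto the fine cluster and orthonormalize), and finally obtains the energy bound \eqref{e:eigenvector1} not from any leakage estimate but from the algebraic identity \eqref{e:enl2} combined with \eqref{e:eigenvector2} and Theorem~\ref{t:lower}. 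Your operator-theoretic rederivation is correct where it is explicit: $T_H=(1-\Pf)T$ holds because $1-\Pf$ is the $a$-orthogonal projection onto $\Vc$; the superconvergence $\tnorm{\Pf T\uc^{(\ell+j)}}\lesssim\alpha^{-1}H^2\tnorm{\uc^{(\ell+j)}}$, hence $\norm{\Pf T\uc^{(\ell+j)}}\lesssim(\lambda_h^{(\ell)})^{1/2}\alpha^{-3/2}H^3$, follows exactly as you say from Theorem~\ref{t:quasiL2} and \eqref{e:interr}; your identity for $\tnorm{(1-E)\uc^{(\ell+j)}}^2$ checks out, and the quadratic inequality reproduces both terms of \eqref{e:eigenvector1} (in fact with $\sqrt{\ell}$, resp.\ no $\ell$-factor, in the places where the paper writes $\sqrt{\ell}$ and $\ell$, which is consistent since the stated bounds are weaker). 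What each approach buys: the paper gets brevity at the cost of a black-box citation; you get a self-contained argument that exposes exactly why the operator error is $O(H^3)$ rather than $O(H)$, namely that the quasi-orthogonality of Theorem~\ref{t:quasiL2} is applied to $T\uc^{(\ell+j)}$.

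The one step you pass over too quickly is the claim that $\dist\bigl(\mu_H,\sigma(T)\setminus\{1/\lambda_h^{(\ell)}\}\bigr)^{-1}$ ``condenses into $(1+\rho)$''. Your resolvent bound measures the separation of the coarse Ritz value $\lambda_H^{(\ell+j)}$ from the \emph{fine} non-cluster eigenvalues $\lambda_h^{(k)}$ (worst case $k=\ell+r$), whereas the paper's $\rho$ measures the separation of the fine cluster value $\lambda_h^{(\ell)}$ from the \emph{coarse} non-cluster Ritz values $\lambda_H^{(k)}$. These are not the same quantity. For indices below the cluster, the monotonicity $\lambda_h^{(k)}\leq\lambda_H^{(k)}$ together with $\lambda_h^{(\ell)}\leq\lambda_H^{(\ell+j)}$ shows your gap dominates the paper's; but for $k\geq\ell+r$ the inequalities point the wrong way: if a fine eigenvalue $\lambda_h^{(\ell+r)}$ sits just above the cluster while its Ritz value $\lambda_H^{(\ell+r)}$ is well separated from $\lambda_h^{(\ell)}$, your constant blows up (your distance can even vanish if $\lambda_H^{(\ell+j)}$ crosses $\lambda_h^{(\ell+r)}$) while the paper's $\rho$ stays moderate. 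Closing this requires an extra reconciliation step, e.g., applying Theorem~\ref{t:lower} at level $\ell+r$ (admissible under the stated smallness condition whenever $\lambda_h^{(\ell+r)}\lesssim\lambda_h^{(\ell)}$) to conclude that $\lambda_H^{(\ell+r)}-\lambda_h^{(\ell+r)}$ is $O(H^4)$ and hence that the two gaps agree up to a higher-order perturbation, possibly at the price of a further-restricted constant in the hypothesis on $H$. As written, your argument proves the theorem with a gap constant defined through the fine spectrum; matching the paper's exact $\rho$ needs this additional argument.
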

\begin{proof}
The analysis presented in \cite[Lemma 6.4 and Theorem 6.2]{MR0443377} shows that, for any $j=0,1,\ldots,r-1$, there is a function $\tuc^{(\ell+j)}\in \operatorname{span}(\{\uc^{(\ell+i)}\}_{i=0}^{r-1})$ such that
\begin{equation*}
\|u_h^{(\ell+j)}-\tuc^{(\ell+j)}\|\leq (1+\rho)\|\Pf u_h^{(\ell+j)}\|.
\end{equation*}
According to the $a$-orthogonal decomposition in Lemma \ref{l:aod}, $\Pf u_h^{(\ell+j)}$ is the fine scale part of $u_h^{(\ell+j)}$. Hence, the interpolation error estimate \eqref{e:interr} and Lemma \ref{c:L2od} yield
\begin{equation*}
\sum_{j=1}^{r-1}\|u_h^{(\ell+j)}-\tuc^{(\ell+j)}\|^2\lesssim (1+\rho)^2\ell^2\left(\frac{\lambda_h^{(\ell)}}{\alpha}\right)^{3}H^6.
\end{equation*}
If the right-hand side is small enough, i.e., if the multiplicative constant hidden in $H\lesssim \ell^{-1/3}(1+\rho)^{-1/3}\sqrt{\alpha/\lambda^{(\ell)}_h}$ is sufficiently small, the linear transformation of the orthonormal basis $\left\{\uc^{(\ell+j)}\right\}_{j=0}^{r-1}$ which defines the set of functions $\left\{\tuc^{(\ell+j)}\right\}_{j=0}^{r-1}$ may be replaced with an orthogonal transformation, without any harm to the estimate. In this regime, the application of the inverse orthogonal transformation to the errors proves the $L^2$ bound \eqref{e:eigenvector2}. 

For the proof of \eqref{e:eigenvector1}, observe that for any $v\in \operatorname{span}(\{u_h^{(\ell+i)}\}_{i=0}^{r-1})$ with $\|v\|=1$ it holds
\begin{align}
\tnorm{v-\uc^{(\ell)}}^2&=\lambda_h^{(\ell)}-2\lambda_h^{(\ell)}(v,\uc^{(\ell)})_{L^2(\Omega)}+\lambda_H^{(\ell)}\nonumber \\
&=\lambda_h^{(\ell)}(2-2(v,\uc^{(\ell)})_{L^2(\Omega)})+\lambda_H^{(\ell)}-\lambda_h^{(\ell)}\nonumber \\
&=\lambda_h^{(\ell)}\|v-\uc^{(\ell)}\|^2+\lambda_H^{(\ell)}-\lambda_h^{(\ell)}. \label{e:enl2}
\end{align}
The assertion then follows by combining equation (\ref{e:enl2}) with $v=\tilde{u}_h^{(\ell+j)}$, \eqref{e:eigenvector2}, and Theorem~\ref{t:lower}.

\end{proof}

\section{Practical Aspects}\label{s:practical}
This section discusses the efficient approximation of the corrector functions $\Pf\phi_z$ from \eqref{e:corrector} by localization, the generalization to non-nested meshes, some post-processing technique, and the overall complexity of our method.

\subsection{Localization of fine scale computations}\label{ss:compression}
The construction of the coarse space $\Vc$ is based on the fine scale equations \eqref{e:corrector} which are formulated on the whole domain $\Omega$. This makes them expensive to compute. However, in \cite{MP11} it was shown that $\Pf\phi_z$ decays exponentially fast outside of the support of the coarse basis function $\phi_z$. We specify this feature as follows. Let $k\in\mathbb{N}$. We define nodal patches $\omega_{z,k}$ of $k$ coarse grid layers centered around the node $z\in\N_H$ by
\begin{equation}\label{e:omega}
 \begin{aligned}
 \omega_{z,1}&:=\support \phi_z=\cup\left\{T\in\triH\;|\;z\in T\right\},\\
 \omega_{z,k}&:=\cup\left\{T\in\triH\;|\;T\cap \omega_{z,k-1}\neq\emptyset\right\} \quad \mbox{for} \enspace k\geq 2.
\end{aligned}
\end{equation}
The result in the decay of $\Pf \phi_z$ in \cite{MP11} can be expressed as follows.
For all vertices $z\in\N_H$ and for all $k\in\mathbb{N}$, it holds
\begin{equation}\label{l:decay}
\norm{A^{1/2}\nabla\Pf\phi_z}_{L^2(\Omega\setminus\omega_{z,k})}\lesssim e^{-(\alpha/\beta)^{1/2} k}\tnorm{\Pf\phi_z}.
\end{equation}
For moderate contrast $\beta/\alpha$, this motivates the truncation of the computations of the basis functions to local patches $\omega_{z,k}$. We approximate $\psi_z=\Pf\phi_z\in\Vf$ from \eqref{e:corrector} with $\psi_{z,k}\in\Vf(\omega_{z,k}):=\{v\in\Vf\;\vert\;v\vert_{\Omega\setminus\omega_{x,k}}=0\}$ such that
\begin{equation}\label{e:correctorlocal}
 a(\psi_{z,k},v)=a(\phi_z,v)\quad\text{for all }v\in\Vf(\omega_{z,k}).
\end{equation}
We emphasize that $$\Vf(\omega_{z,k}) = \{v\in V_h\;\vert\;v\vert_{\Omega\setminus\omega_{x,k}}=0,\;\forall y\in\N_H\cap\omega_{z,k}:(v,\phi_y)_{L^2(\Omega)}=0\},$$ i.e.,
in a practical computation with lagrangian multipliers only one linear constraint per coarse vertex in the patch $\omega_{x,k}$ needs to be considered.

The localized computations yield a modified coarse space $\Vc^k$ with a local basis
\begin{equation}\label{e:basisVck}
 \Vc^k = \operatorname{span}\{\phi_z-\psi_{z,k}\;\vert\;z\in\N_H\}.
\end{equation}
The number of non-zero entries of the corresponding finite element stiffness and mass matrix is proportional to $k^d N_H$ (note that we expect $N_H^2$ non-zero entries without the truncation).
Due to the exponential decay, the very weak condition $k\approx |\log{H}|$ implies that the perturbation of the ideal method due to this truncation is of higher order and the estimates in Theorems~\ref{t:lower} and \ref{t:eigenvector} remain valid.
We refer to \cite{MP11} for details and proofs. The modified localization procedures from \cite{HP12} and \cite{2013arXiv1312.5922P} with improved accuracy and stability properties might as well  be applied.

\subsection{Non-nested meshes and general coarsening}\label{ss:coarsening}
In Section~\ref{ss:mesh}, we have assumed that $\tri_h$ is derived from $\tri_H$ by some regular refinement, i.e., that the finite element meshes $\tri_h$ and $\tri_H$ are nested. This condition may be impracticable in relevant applications, e.g., in cases where the coefficient encodes microscopic geometric features such as jumps that require accurate resolution and the reasonable resolution can only be achieved by highly unstructured meshes (cf. Figure~\ref{fig:A3} in Section~\ref{ss:numexpunstructered} below).

A closer look to the previous error analysis shows that the nestedness of the underlying meshes is never used explicitly but enters only implicitly via the nestedness of corresponding spaces $V_H\subset V_h$. It turns out that all results generalize to the case where the standard finite element space $V_H$ on the coarse level is replaced with some general (possibly mesh free) coarse space $\tilde{V}_H\subset V_h$ with a local basis $\{\tilde{\phi}_j\}_{j\in J}$; $J$ being some finite index set. Precise necessary conditions for the theory read:
\begin{enumerate}
 \item[(a)] \emph{Local support and finite overlap.} For all $j\in J$, $\diam(\support \tilde{\phi}_j)\lesssim H$ and there is a finite number $\Col$ independent of $H$ such that no point $x\in\Omega$ belongs to the support of more than $\Col$ basis functions.
 \item[(b)] \emph{Non-negativity, continuity and boundedness.} For all $j\in J$, $\tilde{\phi}_j:\Omega\rightarrow [0,1]$ is continuous and $\|\nabla \tilde{\phi}_j\|_{L^{\infty(\Omega)}}\lesssim H^{-1}$.
 \item[(c)] \emph{Partition of unity up to a boundary strip.} For all $x\in\Omega$, it holds that $\dist(x,\partial\Omega)\lesssim H$ or $\sum_{j\in J}\tilde{phi}_j(x)=1$.
\end{enumerate}
Under the conditions (a)--(c), the operator $\IH$, defined by $\IH v:=\sum_{j\in J} \frac{(v,\tilde{\phi}_j)_{L^2(\Omega)}}{(1,\tilde{\phi}_j)_{L^2(\Omega)}}\tilde{\phi}_j$ for $v\in V$,  satisfies the required stability and approximation properties. Their proofs may easily be extracted from \cite{MR1706735}, where a slightly modified operator is considered. For details regarding the generalization of the decompositions and error bounds of this paper to some general coarse space characterized by (a)--(c), we refer to \cite{2013arXiv1312.5922P}, where everything (including the exponential decay of the coarse basis and its localization) has been worked out for a linear boundary value problem.

The conditions (a)--(c) are natural conditions for general coarse spaces used in domain decomposition methods and algebraic multigrid methods; see \cite[Ch. 3.10]{MR2104179} for an overview and \cite{MR1962682} for a particular construction without any coarse mesh. A very simple mesh-based construction which remains very close to the standard finite element space $V_H$ can be found in \cite[Section 2.2]{MR2861254} and works as follows. Given some regular fine mesh $\tri_h$, consider an arbitrary regular quasi-uniform coarse mesh $\tri_H$ with $H>h$. Let $V_h$ (resp. $V_H$) be the corresponding finite element space of continuous $\tri_h$-piecewise (resp. $\tri_H$-piecewise) affine functions and let $I^{\operatorname{nodal}}_h:V_H\subset C^0(\Omega) \rightarrow V_h$ denote the nodal interpolation operator with respect to the fine mesh. The nodal interpolation of standard nodal basis functions of the coarse mesh defines a nested initial coarse space
\begin{equation}\label{e:unstructured}
\tilde{V}_H:=\operatorname{span}\left\{I^{\operatorname{nodal}}_h\phi_z\;\vert\;z\in\N_H\right\}\subset V_h
\end{equation}
and $\Vc:=(1-\Pf)\tilde{V}_H$ is the corresponding coarse space of our method. The desired properties (a)--(c) of $\tilde{V}_H$ are proven in \cite[Lemma 2.1]{MR2861254}. Section~\ref{ss:numexpunstructered} shows numerical results based on this construction.

\subsection{Postprocessing}\label{ss:post}
\newcommand{\ucpost}{u_{\operatorname*{c,post}}}
As already mentioned in the introduction, the two-grid method of
\cite{MR1677419} allows a certain post-processing (solution of linear
problems on the fine scale) of coarse eigenpairs. So far, this method
was mainly used to post-process approximate eigenpairs of standard
finite element approximations on a coarse mesh, i.e., approximations
with respect to the space $V_H$. However, the framework presented in
\cite{MR1677419} is more general and readily applies to the modified
coarse space $\Vc$. Given some approximate eigenpair
$(\lambda_H^{(\ell)},\uc^{(\ell)})\in\mathbb{R}\times\Vc$ with
$\|\uc^{(\ell)}\|=1$ that solves \eqref{e:modeldisc}, the
post-processed approximate eigenfunction $\ucpost^{(\ell)}\in V_h$ is
characterized uniquely by
\begin{equation}\label{e:post}
  a(\ucpost^{(\ell)},v)=\lambda_H^{(\ell)}(\uc^{(\ell)},v)_{L^2(\Omega)}
\end{equation}
for all $v\in V_h$. The corresponding post-processed eigenvalue is 
\begin{equation}\label{e:post2}
  \lambda_{H,\operatorname{post}}^{(\ell)}:=\frac{a(\ucpost^{(\ell)},\ucpost^{(\ell)})}{(\ucpost^{(\ell)},\ucpost^{(\ell)})_{L^2(\Omega)}}.
\end{equation}
The error analysis of \cite{MR1677419} relies solely on the nestedness
$\Vc\subset V_h$ and, in essence, yields the error estimates
\begin{eqnarray*}
  \left|\lambda_{H,\operatorname{post}}^{(\ell)}-\lambda_h^{(\ell)}\right|&\leq&\tnorm{u_h^{(\ell)}-\ucpost^{(\ell)}}^2\\&\lesssim&\left
  (\lambda_H^{(\ell)}-\lambda_h^{(\ell)}\right)^2+\left(\lambda_h^{(\ell)}\right)^2\|u_h^{(\ell)}-\uc^{(\ell)}\|^2.
\end{eqnarray*}
The first estimate follows from \eqref{e:enl2} which remains valid for
$\uc^{(\ell)}$ and $\lambda_H^{(\ell)}$ replaced with $\ucpost^{(\ell)}$
and $\lambda_{H,\operatorname{post}}^{(\ell)}$. The second estimate
follows from the construction and standard inequalities (cf. \cite[Eq.
(4.3)]{MR1677419}).
Hence, with $u_h^{(\ell)}$ suitably chosen, Theorem~\ref{t:lower} and \ref{t:eigenvector} imply that
the error of the post-processed eigenvalues (resp. post-processed
eigenfunctions) is at least of order $H^6$ (resp. $H^4$). As for all our
previous results, the rates do not depend on any regularity of the
eigenfunctions.
In the third numerical experiment of Section~\ref{s:numexp} we will also
show results for this post-processing technique.

\subsection{Complexity}\label{ss:complex}
Finally, we shall comment on the overall complexity of our approach. Consider quasi-uniform meshes of size $H$ and $h$ and corresponding conforming first-order finite element spaces $V_H$ and $V_h$. We want to approximate the eigenvalues related to $V_h$.

In order to set up the coarse space $\Vc$, we need to solve $N_H$ linear problems with approximately $k^d N_h/N_H$ degrees of freedom each; the parameter $k$ being the truncation parameter as above. Since almost linear complexity is possible (using, e.g., multilevel preconditioning techniques), the cost for solving one of these problems up to a given accuracy is proportional to the number of degrees of freedom $N_h/N_H$ up to possible logarithmic factors. This yields an overall complexity of $k^d N_h\log(N_h)$ (resp. $N_H N_h\log(N_h)$ if $k^d\geq N_H$) for setting up the coarse problem. Note that this effort can be reduced drastically either by considering the independence of the linear problems in terms of parallelism or by exploiting a possible periodicity in the problem and the mesh.
In the latter case, only very few of the problems have to be computed because all the other ones are equivalent up to translation or rotation of coordinates.

On top of the assembling, an $N_H$-dimensional eigenvalue problem is to be solved. The complexity of this depends only on $N_H$, the number of eigenvalues of interest, and the truncation parameter $k$ but \emph{not} on the critically large parameter $N_h$.

The cost of the post-processing presented in Section~\ref{ss:post} is proportional to one fine solve for each eigenpair of interest, i.e., proportional to $N_h$ up to some logarithmic factor.

\section{Numerical Experiments}\label{s:numexp}
Three numerical experiments shall illustrate our theoretical results. While the first two experiments consider nested coarse and fine meshes, the third experiments uses the generalized coarsening strategy of Section~\ref{ss:coarsening}. In all experiments, we focus on the case without localization. The localization (as discussed in Section~\ref{ss:compression}) has been studied extensively for the linear problem in \cite{MP11,HP12,2013arXiv1308.3379H,2013arXiv1312.5922P} and for semi-linear problems in \cite{Henning.Mlqvist.Peterseim:2012}. In the present context of eigenvalue approximation, we are interested in observing the enormous convergence rate which is $4$ or higher for the eigenvalues. In order to achieve this rate also with truncation, patches have to be large (at least $4$ layers of elements) which pays off only asymptotically when $H$ is small enough.

\subsection{Constant coefficient on L-shaped domain}
Let $\Omega:=(-1,1)^2\setminus[0,1]^2$ be the L-shaped domain. Consider the constant scalar coefficient $A_1=1$ and uniform coarse meshes with mesh widths $\sqrt{2}H=2^{-1},\ldots,2^{-4}$ of $\Omega$ as depicted in Figure~\ref{f:tri}.
\begin{figure}
\begin{center}
\includegraphics[width=0.35\textwidth]{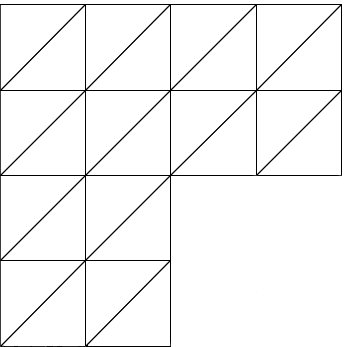}
\end{center}
\caption{Initial uniform triangulation of the $L$-shape domain (5 degrees of freedom).}\label{f:tri}
\end{figure}

The reference mesh $\tri_h$ has maximal mesh width $h=2^{-7}/\sqrt{2}$. We consider some $P1$ conforming finite element approximation of the eigenvalues on the reference mesh $\tri_h$ and compare these discrete eigenvalues $\lambda_h^{(\ell)}$ with coarse scale approximations depending on the coarse mesh size $H$.

Table~\ref{tab:errorA1} shows results for the case without truncation, i.e., all linear problems have been solved on the whole of $\Omega$.
\begin{table}[h]
\centering
\begin{tabular}{rrrrrr}
\hline
$\ell$&$\lambda_h^{(\ell)}$&$e^{(\ell)}(1/2\sqrt{2})$&$e^{(\ell)}(1/4\sqrt{2})$&$e^{(\ell)}(1/8\sqrt{2})$&$e^{(\ell)}(1/16\sqrt{2})$\\
\hline
1 &  9.6436568 & 0.004161918 & 0.000041786 & 0.000000696 & 0.000000014\\
2 & 15.1989733 & 0.009683715 & 0.000083718 & 0.000000888 & 0.000000011\\
3 & 19.7421815 & 0.024238729 & 0.000199984 & 0.000001930 & 0.000000022\\
4 & 29.5280022 & 0.084950011 & 0.000679046 & 0.000006309 & 0.000000074\\
5 & 31.9266947 & 0.120246865 & 0.001032557 & 0.000011298 & 0.000000169\\
6 & 41.4911125 & - & 0.002220585 & 0.000019622 & 0.000000264\\
7 & 44.9620831 & - & 0.002837949 & 0.000022540 & 0.000000257\\
8 & 49.3631818 & - & 0.003535358 & 0.000027368 & 0.000000295\\
9 & 49.3655616 & - & 0.004143842 & 0.000031434 & 0.000000343\\
10 & 56.7367306 & - & 0.006494922 & 0.000052862 & 0.000000606\\
11 & 65.4137240 & - & 0.013504833 & 0.000094150 & 0.000000995\\
12 & 71.0950435 & - & 0.013314963 & 0.000095197 & 0.000001077\\
13 & 71.6015951 & - & 0.011792861 & 0.000084001 & 0.000000851\\
14 & 79.0044010 & - & 0.021302527 & 0.000155038 & 0.000001526\\
15 & 89.3721008 & - & 0.038951872 & 0.000233603 & 0.000002613\\
16 & 92.3686575 & - & 0.042125029 & 0.000253278 & 0.000002442\\
17 & 97.4392146 & - & 0.033015921 & 0.000254700 & 0.000002435\\
18 & 98.7544790 & - & 0.039634464 & 0.000264156 & 0.000002482\\
19 & 98.7545515 & - & 0.046865242 & 0.000268012 & 0.000002500\\
20 & 101.6764284 & - & 0.045797998 & 0.000311683 & 0.000003071\\
\hline\\
\end{tabular}
\caption{Errors $e^{(\ell)}(H)=:\frac{\lambda_H^{(\ell)}-\lambda_h^{(\ell)}}{\lambda_h^{(\ell)}}$ for $\ell=1,\ldots,20$, constant coefficient $A_1$, and various choices of the coarse mesh size $H$.}
\label{tab:errorA1}
\end{table}
For fixed $\ell$, the rate of convergence of the eigenvalue error $\lambda_H^{(\ell)}-\lambda_h^{(\ell)}$ in terms of $H$ observed in Table~\ref{tab:errorA1} is between $6$ and $7$ which is even better than predicted in Theorem~\ref{t:lower} and in Remark~\ref{r:smooth}.

\subsection{Rough coefficient with multiscale features}
Let $\Omega:=(0,1)^2$ be the unit square. The scalar coefficient $A_2$ (see Figure~\ref{fig:A2}) is piecewise constant with respect to the uniform Cartesian grid of width $2^{-6}$. Its values are taken from the data of the SPE10 benchmark, see \texttt{http://www.spe.org/web/csp/}. The coefficient is highly varying and strongly heterogeneous. The contrast for $A_2$ is large, $\beta(A_2)/\alpha(A_1)\approx 4\cdot 10^6$. Consider uniform coarse meshes of size $\sqrt{2}H=2^{-1},2^{-2},\ldots,2^{-4}$ of $\Omega$ (cf. Figure~\ref{fig:A2}). Note that none of these meshes resolves the rough coefficient $A_2$ appropriately. Hence, (local) regularity cannot be exploited on coarse meshes.

Again, the reference mesh $\tri_h$ has width $h=2^{-7}/\sqrt{2}$ and we compare the discrete eigenvalues $\lambda_h^{(\ell)}$ (with respect to some $P1$ conforming finite element approximation of the eigenvalues on the reference mesh $\tri_h$) with coarse scale approximations depending on the coarse mesh size $H$. Table~\ref{tab:errorA2} shows the errors and allows us to estimate the average rate around $4$ which matches our expectation from the theory. We emphasize that the large contrast does not seem to affect the accuracy of our method in approximating the eigenvalues $\lambda_h^{(\ell)}$. However, the accuracy of $\lambda_h^{(\ell)}$ may be affected by the high contrast and the lack of regularity caused by the coefficient.

\begin{figure}
\begin{center}
\includegraphics[width=0.53\textwidth]{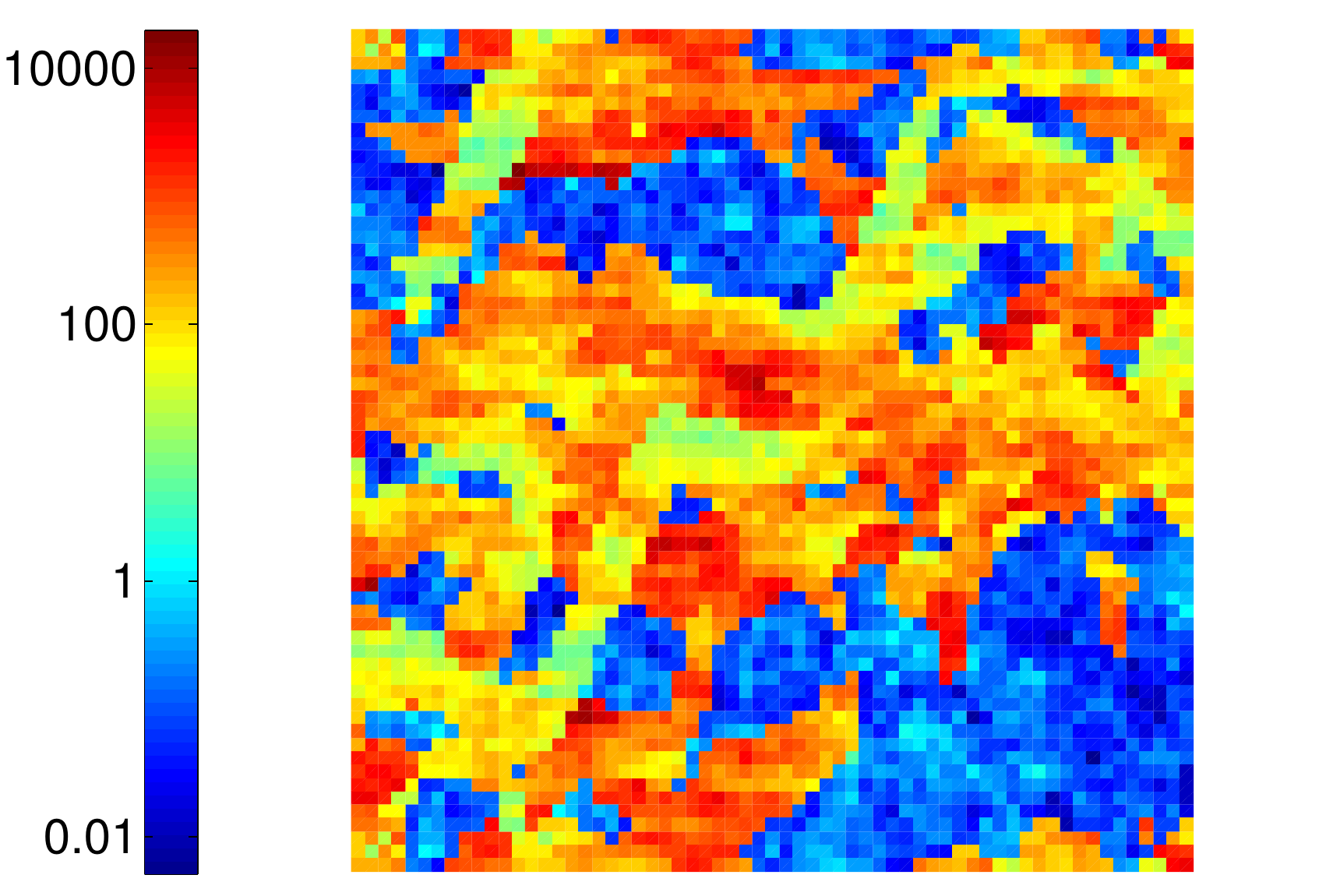}\includegraphics[width=0.35\textwidth]{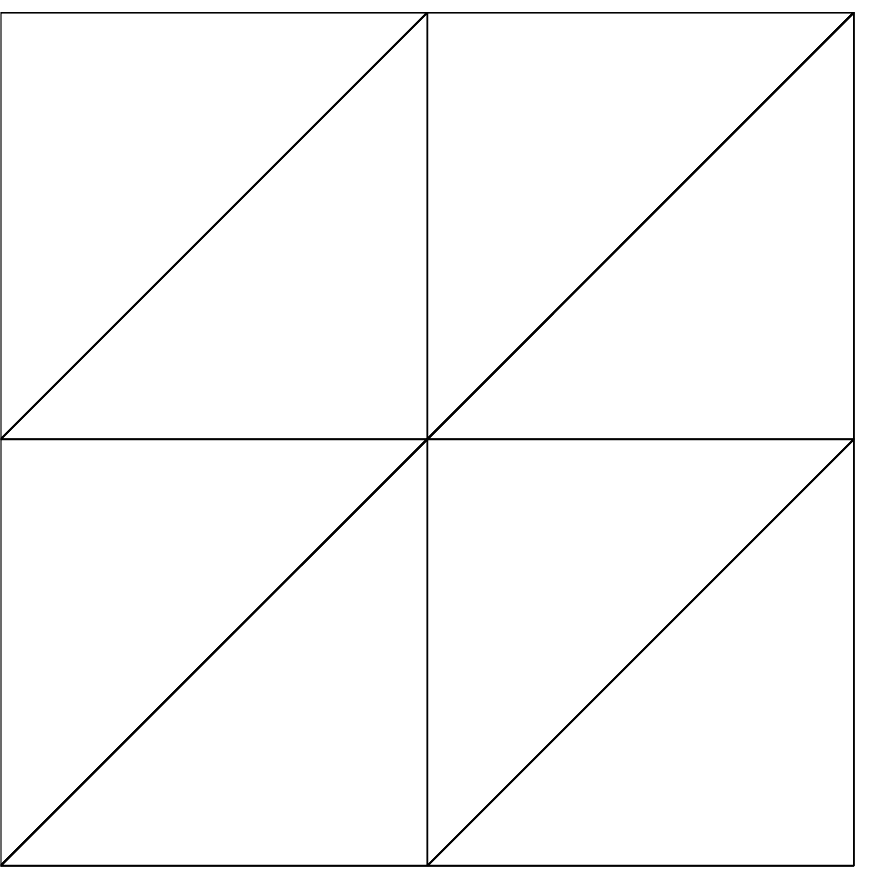}\end{center}
\caption{Scalar coefficient $A_2$ used in the second numerical experiment and initial uniform triangulation of the unit square (1 degree of freedom).}\label{fig:A2}
\end{figure}

\begin{table}[h]
\centering
\begin{tabular}{rrrrrr}
\hline
$\ell$&$\lambda_h^{(\ell)}$&$e^{(\ell)}(1/2\sqrt{2})$&$e^{(\ell)}(1/4\sqrt{2})$&$e^{(\ell)}(1/8\sqrt{2})$&$e^{(\ell)}(1/16\sqrt{2})$\\
\hline
 1&  21.4144522   &  5.472755371 &  0.237181706 &  0.010328293 &  0.000781683 \\
 2&  40.9134676 &                  - &  0.649080539 &  0.032761482 &  0.002447049 \\
 3&  44.1561133 &                  - &  1.687388874 &  0.097540102 &  0.004131422 \\
 4&  60.8278691 &                  - &  1.648439518 &  0.028076168 &  0.002079812\\
 5&  65.6962136 &                  - &  2.071005692 &  0.247424446 &  0.006569640 \\
 6&  70.1273082 &                  - &  4.265936007 &  0.232458016 &  0.016551520 \\
 7&  82.2960238 &                  - &  3.632888104 &  0.355050163 &  0.013987920 \\
 8&  92.8677605 &                  - &  6.850048057 &  0.377881216 &  0.049841235 \\
 9&  99.6061234 &                  - &  10.305084010 &  0.469770376 &  0.026027378 \\
 10&  109.1543283 &                  - &                  - &  0.476741452 &  0.005606426 \\
 11&  129.3741945 &                  - &                  - &  0.505888044 &  0.062382302 \\
 12&  138.2164330 &                  - &                  - &  0.554736550 &  0.039487317 \\
 13&  141.5464639 &                  - &                  - &  0.540480876 &  0.043935515 \\
 14&  145.7469718 &                  - &                  - &  0.765411709 &  0.034249528 \\
 15&  152.6283573 &                  - &                  - &  0.712383825 &  0.024716759 \\
 16&  155.2965039 &                  - &                  - &  0.761104705 &  0.026228034 \\
 17&  158.2610708 &                  - &                  - &  0.749058367 &  0.091826207 \\
 18&  164.1452194 &                  - &                  - &  0.840736127 &  0.118353184 \\
 19&  171.1756923 &                  - &                  - &  0.946719951 &  0.111314058 \\
 20&  179.3917590 &                  - &                  - &  0.928617606 &  0.119627862 \\
\hline\\
\end{tabular}
\caption{Errors $e^{(\ell)}(H)=:\frac{\lambda_H^{(\ell)}-\lambda_h^{(\ell)}}{\lambda_h^{(\ell)}}$ for $\ell=1,\ldots,20$, rough coefficient $A_2$, and various choices of the coarse mesh size $H$.}
\label{tab:errorA2}
\end{table}

\subsection{Particle composite modeled by an unstructured mesh}\label{ss:numexpunstructered}
\begin{figure}
\begin{center}
\includegraphics[width=0.48\textwidth]{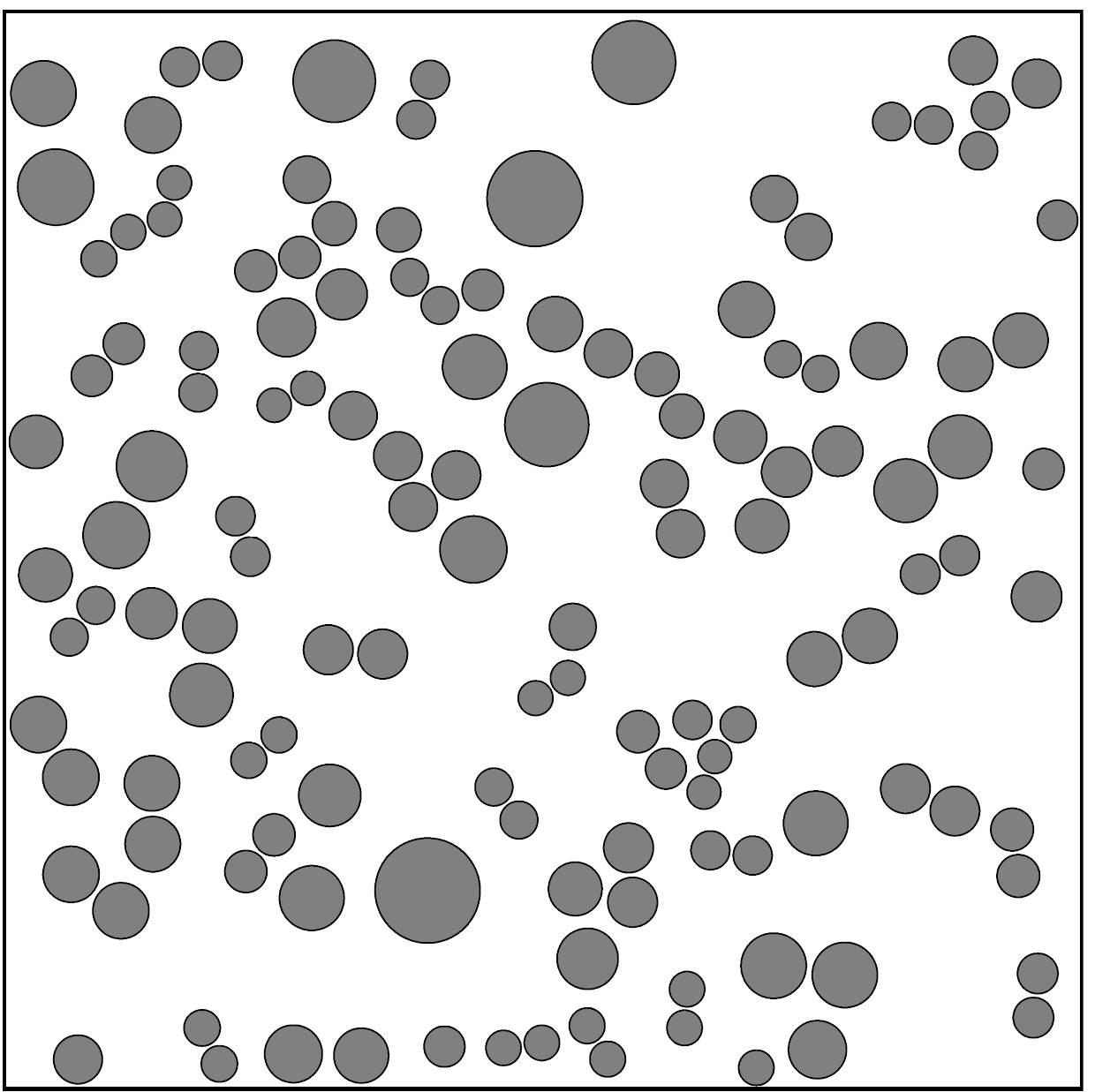}\hspace{0.01\textwidth}\includegraphics[width=0.48\textwidth]{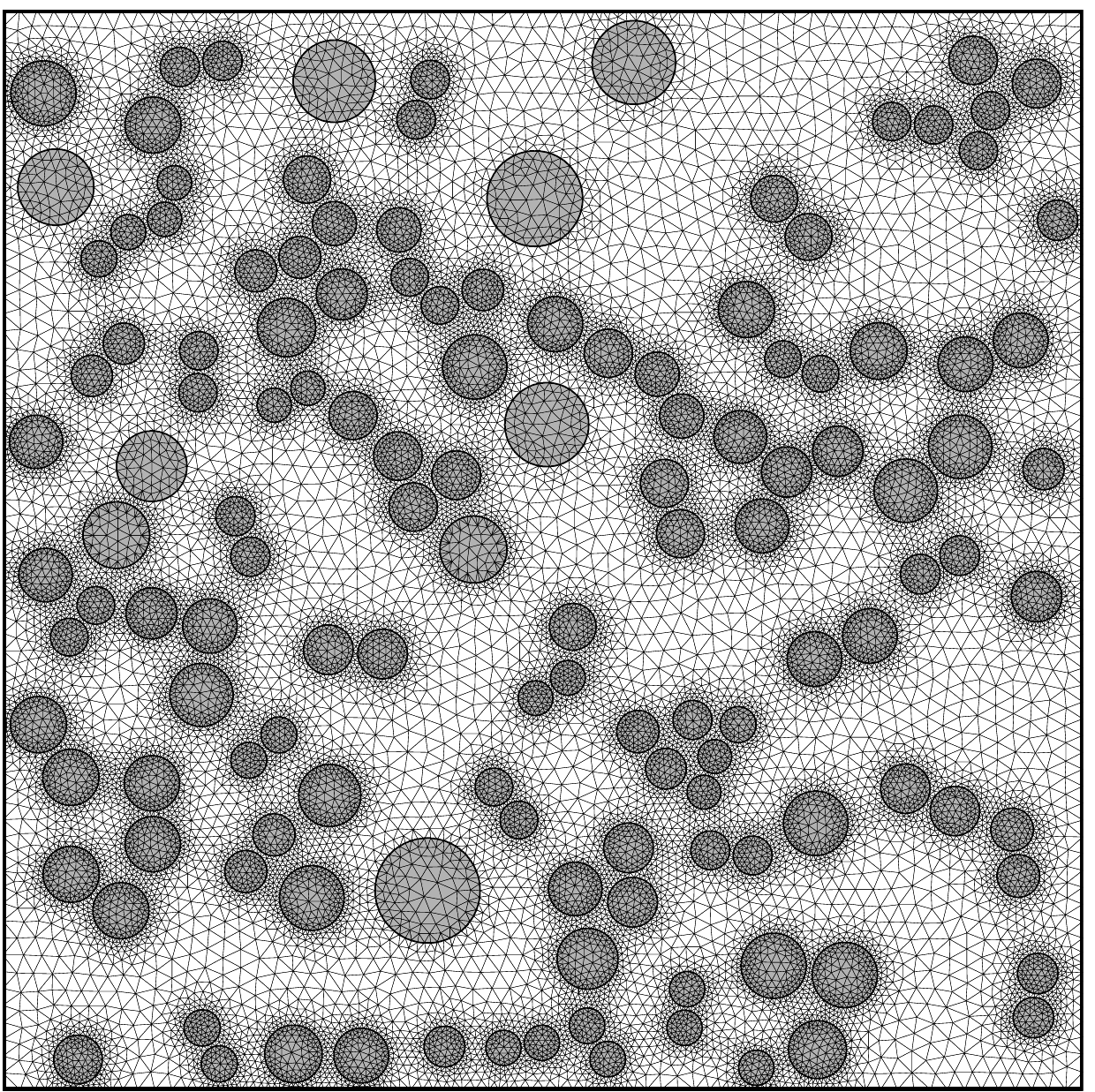}\end{center}
\caption{Left: Scalar coefficient $A_3$ used in the third numerical experiment. $A_3$ takes the value $100$ in the gray shaded inclusions and the value $1$ elsewhere. Right: Unstructered fine mesh $\tri_h$ aligned with jumps of the coefficient $A_3$.}\label{fig:A3}
\end{figure}
Let $\Omega:=(0,1)^2$ be the unit square. In this experiment, the scalar coefficient $A_3$ models heat conductivity in some model composite material with randomly dispersed circular inclusions as depicted in Figure~\ref{fig:A3}. The coefficient $A_3$ takes the value $100$ in the gray shaded inclusions and the value $1$ elsewhere. In order to resolve the discontinuities, we simply align the fine mesh $\tri_h$ with the boundaries of the inclusions (see Figure~\ref{fig:A3}). The mesh size of $\tri_h$ satisfies $2^{-9}\lesssim h\lesssim 2^{-7}$. Note that this fine mesh $\tri_h$ is solely based on geometric resolution and shape regularity. The grading towards the inclusions is not adapted to the characteristic behavior of the eigenfunctions. However, this mesh might be the actual output of some commercial mesh generator or modeling tool. Sufficient resolution could be achieved with fewer degrees of freedom, however, this would require more sophisticated discretization spaces; we refer to \cite{MR2684351,Peterseim:2013,Peterseim.Carstensen:2013} for possible choices and further references.

As in the previous experiment, we consider uniform coarse meshes of size $\sqrt{2}H=2^{-1},2^{-2},\ldots,2^{-4}$ of $\Omega$ (cf. Figure~\ref{fig:A2}). Note that these meshes neither resolves the coefficient $A_3$ appropriately nor can be refined to $\tri_h$ in a nested way.
For the construction of the upscaling approximation we employ the generalized coarse space defined in \eqref{e:unstructured} in Section~\ref{ss:coarsening}. We compare the discrete eigenvalues $\lambda_h^{(\ell)}$ (with respect to some $P1$ conforming finite element approximation of the eigenvalues on the reference mesh $\tri_h$) with coarse scale approximations depending on the coarse discretization parameter $H$. Table~\ref{tab:errorA3} shows the results which clearly support our claim that the nestedness of coarse and fine meshes is not essential and that upscaling far beyond the characteristic length scales of the problem (i.e., the radii of the inclusions and their distances) is possible.

For this problem, we have also computed the post-processed approximations according to Section~\ref{ss:post}. Table~\ref{tab:errorA32} shows the error for the eigenvalues which are more accurate by several orders of magnitude. The experimental rates are roughly between $5$ and $6$ in Table~\ref{tab:errorA3} without post-processing and around $9$ to $10$ after post-processing in Table~\ref{tab:errorA32}.

\begin{table}[h]
\centering
\begin{tabular}{rrrrrr}
\hline
$\ell$&$\lambda_h^{(\ell)}$&$e^{(\ell)}(1/2\sqrt{2})$&$e^{(\ell)}(1/4\sqrt{2})$&$e^{(\ell)}(1/8\sqrt{2})$&$e^{(\ell)}(1/16\sqrt{2})$\\
\hline
1 & 25.6109462 & 0.025518831 & 0.000572341 & 0.000017083 & 0.000000700\\
2 & 58.9623566 & - & 0.005235813 & 0.000090490 & 0.000002710\\
3 & 67.5344854 & - & 0.006997582 & 0.000154850 & 0.000006488\\
4 & 98.2808694 & - & 0.023497502 & 0.000358178 & 0.000011675\\
5 & 121.2290664 & - & 0.052366141 & 0.000563438 & 0.000016994\\
6 & 125.2014779 & - & 0.066627585 & 0.000747688 & 0.000019934\\
7 & 156.0597873 & - & 0.145676350 & 0.001579177 & 0.000034329\\
8 & 168.2376096 & - & 0.095360287 & 0.001320185 & 0.000043781\\
9 & 197.4467434 & - & 0.343991317 & 0.002888471 & 0.000049479\\
10 & 209.4657306 & - & - & 0.003223901 & 0.000056318\\
11 & 222.4472476 & - & - & 0.003431462 & 0.000080284\\
12 & 245.5656759 & - & - & 0.005906282 & 0.000102243\\
13 & 253.7074603 & - & - & 0.006215809 & 0.000121646\\
14 & 288.0756442 & - & - & 0.013859535 & 0.000180899\\
15 & 298.8903269 & - & - & 0.010587124 & 0.000138404\\
16 & 311.4410556 & - & - & 0.012159268 & 0.000161510\\
17 & 324.6865434 & - & - & 0.012143676 & 0.000176624\\
18 & 336.7931865 & - & - & 0.016554437 & 0.000233067\\
19 & 379.5697606 & - & - & 0.023254268 & 0.000325324\\
20 & 386.9938901 & - & - & 0.028772395 & 0.000383532\\
\hline\\
\end{tabular}
\caption{Errors $e^{(\ell)}(H)=:\frac{\lambda_H^{(\ell)}-\lambda_h^{(\ell)}}{\lambda_h^{(\ell)}}$ for $\ell=1,\ldots,20$, coefficient $A_3$, and various choices of the coarse mesh size $H$.}
\label{tab:errorA3}
\end{table}

\begin{table}[h]
\centering
\begin{tabular}{rrrrrr}
\hline
$\ell$&$\lambda_h^{(\ell)}$&$e^{(\ell)}(1/2\sqrt{2})$&$e^{(\ell)}(1/4\sqrt{2})$&$e^{(\ell)}(1/8\sqrt{2})$&$e^{(\ell)}(1/16\sqrt{2})$\\
\hline
1 & 25.6109462 & 0.001559704 & 0.000003765 & 0.000000008 & 3.5e-10 \\
2 & 58.9623566 & - & 0.000191532 & 0.000000213 & 1.9e-08 \\
3 & 67.5344854 & - & 0.000284980 & 0.000000474 & 0.000000001 \\
4 & 98.2808694 & - & 0.002239689 & 0.000002253 & 0.000000004 \\
5 & 121.2290664 & - & 0.007461217 & 0.000005065 & 0.000000008 \\
6 & 125.2014779 & - & 0.011284614 & 0.000006826 & 0.000000008 \\
7 & 156.0597873 & - & 0.042466017 & 0.000023867 & 0.000000024 \\
8 & 168.2376096 & - & 0.025093182 & 0.000027547 & 0.000000042 \\
9 & 197.4467434 & - & 0.186960343 & 0.000072471 & 0.000000051 \\
10 & 209.4657306 & - & - & 0.000105777 & 0.000000079 \\
11 & 222.4472476 & - & - & 0.000131569 & 0.000000129 \\
12 & 245.5656759 & - & - & 0.000286351 & 0.000000213 \\
13 & 253.7074603 & - & - & 0.000268463 & 0.000000255 \\
14 & 288.0756442 & - & - & 0.000915102 & 0.000000473 \\
15 & 298.8903269 & - & - & 0.000762135 & 0.000000403 \\
16 & 311.4410556 & - & - & 0.000873769 & 0.000000504 \\
17 & 324.6865434 & - & - & 0.000955392 & 0.000000642 \\
18 & 336.7931865 & - & - & 0.001335246 & 0.000000977 \\
19 & 379.5697606 & - & - & 0.002896202 & 0.000001886 \\
20 & 386.9938901 & - & - & 0.007202657 & 0.000001908 \\
\hline\\
\end{tabular}
\caption{Errors $e^{(\ell)}(H)=:\frac{\lambda_{H,\operatorname{post}}^{(\ell)}-\lambda_h^{(\ell)}}{\lambda_h^{(\ell)}}$ after post-processing for $\ell=1,\ldots,20$, coefficient $A_3$, and various choices of the coarse mesh size $H$.}
\label{tab:errorA32}
\end{table}

\bibliographystyle{alpha}

\begin{thebibliography}{BdBSW66}

\bibitem[BBS08]{MR2425502}
L.~Banjai, S.~B{\"o}rm, and S.~Sauter.
\newblock F{EM} for elliptic eigenvalue problems: how coarse can the coarsest
  mesh be chosen? {A}n experimental study.
\newblock {\em Comput. Vis. Sci.}, 11(4-6):363--372, 2008.

\bibitem[BdBSW66]{MR0203926}
Garrett Birkhoff, C.~de~Boor, B.~Swartz, and B.~Wendroff.
\newblock Rayleigh-{R}itz approximation by piecewise cubic polynomials.
\newblock {\em SIAM J. Numer. Anal.}, 3:188--203, 1966.

\bibitem[BGO13]{MR3018645}
Randolph~E. Bank, Luka Grubi{\v{s}}i{\'c}, and Jeffrey~S. Ovall.
\newblock A framework for robust eigenvalue and eigenvector error estimation
  and {R}itz value convergence enhancement.
\newblock {\em Appl. Numer. Math.}, 66:1--29, 2013.

\bibitem[Bof10]{MR2652780}
Daniele Boffi.
\newblock Finite element approximation of eigenvalue problems.
\newblock {\em Acta Numer.}, 19:1--120, 2010.

\bibitem[CG11]{MR2810801}
Carsten Carstensen and Joscha Gedicke.
\newblock An oscillation-free adaptive {FEM} for symmetric eigenvalue problems.
\newblock {\em Numer. Math.}, 118(3):401--427, 2011.

\bibitem[CG12]{doi:10.1137/090769430}
C.~Carstensen and J.~Gedicke.
\newblock An adaptive finite element eigenvalue solver of asymptotic
  quasi-optimal computational complexity.
\newblock {\em SIAM Journal on Numerical Analysis}, 50(3):1029--1057, 2012.

\bibitem[CGH10]{MR2684351}
C.-C. Chu, I.~G. Graham, and T.-Y. Hou.
\newblock A new multiscale finite element method for high-contrast elliptic
  interface problems.
\newblock {\em Math. Comp.}, 79(272):1915--1955, 2010.

\bibitem[Cia87]{CiarletPb}
P.G. Ciarlet.
\newblock {\em The finite element method for elliptic problems}.
\newblock North-Holland, 1987.

\bibitem[CV99]{MR1706735}
Carsten Carstensen and R{\"u}diger Verf{\"u}rth.
\newblock Edge residuals dominate a posteriori error estimates for low order
  finite element methods.
\newblock {\em SIAM J. Numer. Anal.}, 36(5):1571--1587 (electronic), 1999.

\bibitem[DPR03]{MR1998821}
Ricardo~G. Dur{\'a}n, Claudio Padra, and Rodolfo Rodr{\'{\i}}guez.
\newblock A posteriori error estimates for the finite element approximation of
  eigenvalue problems.
\newblock {\em Math. Models Methods Appl. Sci.}, 13(8):1219--1229, 2003.

\bibitem[GG09]{MR2485445}
S.~Giani and I.~G. Graham.
\newblock A convergent adaptive method for elliptic eigenvalue problems.
\newblock {\em SIAM J. Numer. Anal.}, 47(2):1067--1091, 2009.

\bibitem[GMZ09]{MR2531037}
Eduardo~M. Garau, Pedro Morin, and Carlos Zuppa.
\newblock Convergence of adaptive finite element methods for eigenvalue
  problems.
\newblock {\em Math. Models Methods Appl. Sci.}, 19(5):721--747, 2009.

\bibitem[Hac79]{MR526484}
W.~Hackbusch.
\newblock On the computation of approximate eigenvalues and eigenfunctions of
  elliptic operators by means of a multi-grid method.
\newblock {\em SIAM J. Numer. Anal.}, 16(2):201--215, 1979.

\bibitem[HM14]{2013arXiv1308.3379H}
Patrick {Henning} and Axel {M{\aa}lqvist}.
\newblock Localized {O}rthogonal {D}ecomposition {T}echniques for {B}oundary
  {V}alue {P}roblems.
\newblock {\em SIAM J. Sci. Comput.}, 36(4):A1609--A1634, 2014.

\bibitem[HMP14a]{2013arXiv1312.5922P}
P.~{Henning}, P.~{Morgenstern}, and D.~{Peterseim}.
\newblock {Multiscale Partition of Unity}.
\newblock In M.~Griebel and M.~A. Schweitzer, editors, {\em Meshfree Methods
  for Partial Differential Equations {VII}}, volume 100 of {\em Lecture Notes
  in Computational Science and Engineering}. Springer, 2014.

\bibitem[HMP14b]{HMP13}
Patrick Henning, Axel M{\aa}lqvist, and Daniel Peterseim.
\newblock Two-{L}evel {D}iscretization {T}echniques for {G}round {S}tate
  {C}omputations of {B}ose-{E}instein {C}ondensates.
\newblock {\em SIAM J. Numer. Anal.}, 52(4):1525--1550, 2014.

\bibitem[HMP14c]{Henning.Mlqvist.Peterseim:2012}
Patrick Henning, Axel Målqvist, and Daniel Peterseim.
\newblock A localized orthogonal decomposition method for semi-linear elliptic
  problems.
\newblock {\em ESAIM: Mathematical Modelling and Numerical Analysis},
  48:1331--1349, 9 2014.

\bibitem[HP13]{HP12}
P.~Henning and D.~Peterseim.
\newblock Oversampling for the multiscale finite element method.
\newblock {\em Multiscale Modeling \& Simulation}, 11(4):1149--1175, 2013.

\bibitem[KN03a]{MR1991264}
Andrew~V. Knyazev and Klaus Neymeyr.
\newblock Efficient solution of symmetric eigenvalue problems using multigrid
  preconditioners in the locally optimal block conjugate gradient method.
\newblock {\em Electron. Trans. Numer. Anal.}, 15:38--55 (electronic), 2003.
\newblock Tenth Copper Mountain Conference on Multigrid Methods (Copper
  Mountain, CO, 2001).

\bibitem[KN03b]{MR1942725}
Andrew~V. Knyazev and Klaus Neymeyr.
\newblock A geometric theory for preconditioned inverse iteration. {III}. {A}
  short and sharp convergence estimate for generalized eigenvalue problems.
\newblock {\em Linear Algebra Appl.}, 358:95--114, 2003.
\newblock Special issue on accurate solution of eigenvalue problems (Hagen,
  2000).

\bibitem[KO06]{MR2206452}
Andrew~V. Knyazev and John~E. Osborn.
\newblock New a priori {FEM} error estimates for eigenvalues.
\newblock {\em SIAM J. Numer. Anal.}, 43(6):2647--2667 (electronic), 2006.

\bibitem[Lar00]{MR1770064}
Mats~G. Larson.
\newblock A posteriori and a priori error analysis for finite element
  approximations of self-adjoint elliptic eigenvalue problems.
\newblock {\em SIAM J. Numer. Anal.}, 38(2):608--625 (electronic), 2000.

\bibitem[LSY98]{MR1621681}
R.~B. Lehoucq, D.~C. Sorensen, and C.~Yang.
\newblock {\em A{RPACK} users' guide}, volume~6 of {\em Software, Environments,
  and Tools}.
\newblock Society for Industrial and Applied Mathematics (SIAM), Philadelphia,
  PA, 1998.
\newblock Solution of large-scale eigenvalue problems with implicitly restarted
  Arnoldi methods.

\bibitem[MM11]{MR2760060}
Volker Mehrmann and Agnieszka Miedlar.
\newblock Adaptive computation of smallest eigenvalues of self-adjoint elliptic
  partial differential equations.
\newblock {\em Numer. Linear Algebra Appl.}, 18(3):387--409, 2011.

\bibitem[MP14]{MP11}
A.~M{\aa}lqvist and D.~Peterseim.
\newblock Localization of elliptic multiscale problems.
\newblock {\em Math. Comp.}, 83(290):2583--2603, 2014.

\bibitem[Ney02]{MR1909253}
Klaus Neymeyr.
\newblock A posteriori error estimation for elliptic eigenproblems.
\newblock {\em Numer. Linear Algebra Appl.}, 9(4):263--279, 2002.

\bibitem[Ney03]{MR2427903}
Klaus Neymeyr.
\newblock Solving mesh eigenproblems with multigrid efficiency.
\newblock In {\em Numerical methods for scientific computing. {V}ariational
  problems and applications}, pages 176--184. Internat. Center Numer. Methods
  Eng. (CIMNE), Barcelona, 2003.

\bibitem[PC13]{Peterseim.Carstensen:2013}
Daniel Peterseim and Carsten Carstensen.
\newblock Finite element network approximation of conductivity in particle
  composites.
\newblock {\em Numer. Math.}, 124(1):73--97, 2013.

\bibitem[Pet14]{Peterseim:2013}
Daniel Peterseim.
\newblock Composite finite elements for elliptic interface problems.
\newblock {\em Math. Comp.}, 83(290):2657--2674, 2014.

\bibitem[Poi90]{MR1505534}
H.~Poincar\'e.
\newblock Sur les {E}quations aux {D}erivees {P}artielles de la {P}hysique
  {M}athematique.
\newblock {\em Amer. J. Math.}, 12(3):211--294, 1890.

\bibitem[PS12]{PS12}
D.~Peterseim and S.~Sauter.
\newblock Finite elements for elliptic problems with highly varying,
  nonperiodic diffusion matrix.
\newblock {\em Multiscale Modeling \& Simulation}, 10(3):665--695, 2012.

\bibitem[Sar02]{MR1962682}
Marcus Sarkis.
\newblock Partition of unity coarse spaces and {S}chwarz methods with harmonic
  overlap.
\newblock In {\em Recent developments in domain decomposition methods
  ({Z}\"urich, 2001)}, volume~23 of {\em Lect. Notes Comput. Sci. Eng.}, pages
  77--94. Springer, Berlin, 2002.

\bibitem[Sau10]{MR2608360}
S.~Sauter.
\newblock {$hp$}-finite elements for elliptic eigenvalue problems: error
  estimates which are explicit with respect to {$\lambda$}, {$h$}, and {$p$}.
\newblock {\em SIAM J. Numer. Anal.}, 48(1):95--108, 2010.

\bibitem[SF73]{MR0443377}
Gilbert Strang and George~J. Fix.
\newblock {\em An analysis of the finite element method}.
\newblock Prentice-Hall Inc., Englewood Cliffs, N. J., 1973.
\newblock Prentice-Hall Series in Automatic Computation.

\bibitem[SVZ11]{MR2861254}
Robert Scheichl, Panayot~S. Vassilevski, and Ludmil~T. Zikatanov.
\newblock Weak approximation properties of elliptic projections with functional
  constraints.
\newblock {\em Multiscale Model. Simul.}, 9(4):1677--1699, 2011.

\bibitem[TW05]{MR2104179}
Andrea Toselli and Olof Widlund.
\newblock {\em Domain decomposition methods---algorithms and theory}, volume~34
  of {\em Springer Series in Computational Mathematics}.
\newblock Springer-Verlag, Berlin, 2005.

\bibitem[XZ01]{MR1677419}
Jinchao Xu and Aihui Zhou.
\newblock A two-grid discretization scheme for eigenvalue problems.
\newblock {\em Math. Comp.}, 70(233):17--25, 2001.

\end{thebibliography}

\end{document}